\def\R{\mathcal{R}}
\def\A{\mathcal{A}}
\def\X{\mathcal{X}}
\def\L{\mathcal{L}}
\def\S{\mathcal{S}}
\newtheorem{df}{Definition}[section]
\newtheorem{thm}[df]{Theorem}
\newtheorem{pro}[df]{Proposition}
\newtheorem{cor}[df]{Corollary}
\newtheorem{rema}[df] {Remark}
\newtheorem{lem}[df] {Lemma}
\begin{document}

\setcounter{page}{1}

\title[The $(b,c)$-inverse in rings and in  the Banach context]{The $(b, c)$-inverse in rings and  in the Banach context}

\author[E. Boasso and G. Kant\'un-Montiel]{Enrico Boasso and Gabriel Kant\'un-Montiel}
\maketitle

\begin{abstract}
In this article the $(b, c)$-inverse will be studied. Several equivalent conditions for the existence of the $(b,c)$-inverse in rings will be given. In particular, the conditions ensuring the existence of the $(b,c)$-inverse,  of the annihilator $(b,c)$-inverse and  of the hybrid $(b,c)$-inverse will be proved to be equivalent, provided $b$ and $c$ are regular elements in a unitary ring $\R$. In addition, the set of all $(b,c)$-invertible elements will be characterized and the reverse order law will be also studied. 
Moreover, the relationship between the $(b,c)$-inverse and the Bott-Duffin inverse will be considered. In the context of Banach algebras, integral,  series and limit representations will be given.
Finally the continuity of the $(b,c)$-inverse will be characterized. \par
\vskip.2truecm
\noindent Mathematics Subject Classification. 15A09, 16B99, 16U99, 46H05.\par
\vskip.2truecm
\noindent  Keywords. Unitary ring, $(b, c)$-inverse, Bott-Duffin $(p, q)$-inverse, Outer inverse, Group inverse, Banach algebra. 
\end{abstract}

\section{Introduction}

\noindent Given a semigroup $\S$ and elements $a$, $b$, $c\in\S$, the notion of a $(b, c)$-inverse of $a$ was introduced in \cite{D} (see \cite[Definition 1.3]{D} or section 2).
This inverse is a class of outer inverses which encompasses several well known  generalized inverses, such as the Drazin inverse and the  Moore-Penrose inverse (in the presence of an involution). 
Furthermore, the aforementioned inverse has been studied by several authors (see \cite{D, D2, G, CCW, KC}) and it is closely related to
other outer inverses such as the $a_{p, q}^{(2, l)}$ outer inverse in Banach algebras  and the image-kernel $(p, q)$-inverse in rings (see \cite{CX, G, MDG} and sections 2 and 7).\par

\indent The aim of this article is to study further properties of the $(b,c)$-inverse enlarging the underlying set, in particular, considering the contexts of unitary rings and Banach algebras.
In section 3, after having recalled some preliminary definitions and facts in section 2, further equivalent conditions that ensure the existence of the outer inverse  under consideration
will be given. From this result it will be derived that the  $(b, c)$-inverse, the annihilator $(b, c)$-inverse and the hybrid $(b, c)$-inverse  (see \cite{D}) coincide, when
$b$ and  $c$ are regular. In section 4 the set of all $(b, c)$-invertible elements in a unitary ring will be characterized and more equivalent characterizations of the 
$(b, c)$-inverse will be proved. In section 5 the relationship between the $(b, c)$-inverse and the  Bott-Duffin $(p, q)$-inverse (see \cite[Definition 3.2]{D}) will be studied. In fact, on the one hand,  
the latter inverse is a particular case of the former inverse (when $b$ and $c$ are idempotents), but on the other, it will be proved that $(b, c)$-invertible elements are Bott-Duffin $(p, q)$-invertible
for suitable $p$ and $q$. In section 6 the reverse order law for the $(b,c)$-inverse and the Bott-Duffin $(p, q)$-inverse will be studied. Finally, in section 7 several results concerning integral representations, convergent series, limits, approximation 
and continuity of the $(b, c)$-inverse will be presented. 

\section{Preliminary definitions and facts}

\noindent From now on $\R$ will denote a unitary ring with unity 1. Let $\R^{-1}$ (respectively $\R^\bullet$) be the set of
invertible elements (respectively of idempotents) of $\R$. Given $a \in \R$, 
the {\em image ideals} are defined by $a \R : = \{ ax: x \in \R \}$ and $\R a : = \{ xa : x \in \R \}$, and 
the {\em kernel ideals} by $a^{-1}(0) : = \{ x \in \R: ax=0 \}$ and $a_{-1}(0) : = \{ x \in \R: xa=0 \}$.

An element $a \in \R$ is said to be {\it regular}, if there exists $x \in \R$ such that $a=axa$.
The element $x$, which is not uniquely determined by $a$, will be said to be  a {\it generalized inverse}
or an {\it inner inverse} of $a$. The set of regular elements of $\R$ will be denoted by $\widehat{\R}$ and,
given $v\in\hat{\R}$, $v\{1\}$ will stand for the set of all inner inverses of $v$. In addition, 
if $y \in \R$ satisfies $yay=y$, then $y$  is said to be an {\it outer inverse} of $a$.
An element $z\in\R$ is said to be a {\it normalized generalized inverse} of $a$, if
$z$ is an inner inverse and an outer inverse of $a$. Note that if $w\in\R$ is an inner inverse of
$a$, then $w'=waw$ is a normalized generalized inverse of $a$.

Now the definition of the key notion of this article will be recalled.\par

\begin{df}[{\cite[Definition 1.3]{D}}]\label{def1}
Let $\R$ be a ring with unity and consider $b, c \in \R$. The element $a\in\R$
will be said to be $(b,c)$-invertible, if there exists $y \in \R$ such that the following
equations hold:\par
\noindent {\rm (i)} $y\in(b\R y)\cap (y\R c)$,\par
\noindent {\rm (ii)} $b=yab$, $c=cay$.
\end{df}

\indent If such inverse exists, then it is unique (see \cite[Theorem 2.1 (i)]{D}). Thus in what follows,
if the element $y$ in Definition \ref{def1} exists, then it will be denoted by $a^{-(b,\hbox{ }c)}$. In addition,
according to \cite[Theorem 2.1 (ii)]{D}, $a^{-(b,\hbox{ }c)}$ is an outer inverse of $a$.\par

\indent In the following remark some properties of $b$ and $c$ will be considered (see also \cite{G}).

\begin{rema}\label{rema3}\rm Let $\R$ be a ring with unity and let $a$, $b$, $c\in \R$. Recall that according to \cite[Proposition 6.1]{D},
necessary and sufficient for $a$ to be $(b, c)$-invertible is that there exists $y\in\R$ such that $yay=y$,
$y\R= b\R$ and $\R y=\R c$.\par
\noindent (i) Let $b'$, $c'\in\R$ be such that $b'\R= b\R$ and $\R c'= \R c$. Then, from \cite[Proposition 6.1]{D},
$a$ is  $(b, c)$-invertible if and only if $a$ is  $(b', c')$-invertible. Moreover, in this case $a^{-(b',\hbox{ }c')}=a^{-(b,\hbox{ }c)}$.\par
\noindent Next suppose that $a^{-(b,\hbox{ }c)}$ exists.\par
\noindent (ii) Let $z\in\R$. Recall that necessary and sufficient for $z$ to have a right inverse (respectively a left inverse) 
is that $z\R=\R$ (respectively $\R z=\R$). Consequently, applying again \cite[Proposition 6.1]{D}, necessary and sufficient for $a\in \R^{-1}$ is that $b$ is right invertible
and $c$ is left invertible.   In this case, $a^{-(b,\hbox{ }c)}=a^{-1}$.\par
\noindent (iii) The elements $b$ and $c$ are regular. In fact, according to Definition \ref{def1}, there are $g$, $h\in\R$
such that $bga^{-(b,\hbox{ }c)}=a^{-(b,\hbox{ }c)}=a^{-(b,\hbox{ }c)}hc$. Therefore,
\begin{align*}
b&=a^{-(b,\hbox{ }c)}ab=bga^{-(b,\hbox{ }c)}ab=bgb,\\
c&=caa^{-(b,\hbox{ }c)}=caa^{-(b,\hbox{ }c)}hc=chc.\\
\end{align*}
\noindent (iv) Let $g'=gbg$ and $h'=hch$. Then, $g'$ and $h'$ are normalized generalized inverses of 
$g$ and $h$, respectively. In addition, since according to Definition \ref{def1}, there is $z\in\R$ such that 
$a^{-(b,\hbox{ }c)}=bzc$, $a^{-(b,\hbox{ }c)}=bg'a^{-(b,\hbox{ }c)}$ and $a^{-(b,\hbox{ }c)}=a^{-(b,\hbox{ }c)}h'c$. In fact,
\begin{align*}
&bg'a^{-(b,\hbox{ }c)}=bg'bzc=bzc=a^{-(b,\hbox{ }c)}.\\
&a^{-(b,\hbox{ }c)}h'c=bzch'c=bzc=a^{-(b,\hbox{ }c)}.\\
\end{align*}

\noindent Therefore, in Definition \ref{def1}, not only $b$, $c\in\hat{\R}$, but also the elements
that satisfy condition (i) of Definition \ref{def1} ($g$ and $h$ in (iii)) can be chosen as normalized generalized inverses
of $b$ and $c$.
\end{rema}

\indent A particular case of the $(b,c )$-inverse is the Bott-Duffin inverse.

\begin{df}[{\cite[Definition 3.2]{D}}]\label{def15}Let $\R$ be a ring with unity and consider $p$, $q\in\R^\bullet$. The element
$a\in\R$ will be said to be Bott-Duffin $(p, q)$-invertible, if there exists $y\in \R$ such that \par
\noindent {\rm (i)} $y=py=yq$,\par
\noindent {\rm (ii)} $yap=p$ and $qay=q$.
\end{df}

\indent Clearly, given $p$, $q\in\R^\bullet$, the Bott-Duffin $(p, q)$-inverse is nothing but the $(b, c)$-inverse 
when $b$ and $c$ are idempotents. In addition, since there exists at most one $(b, c)$-inverse,
the Bott-Duffin inverse is unique, if it exists. According to what has been said, if $a\in\R$ is
Bott-Duffin  $(p,q)$-invertible, then the element $y$ in Definition \ref{def15} will be denoted by
$a^{-(p, \hbox{ }q)}$. On the other hand, it is worth noticing that the image-kernel $(p, q)$-inverse (see \cite{G, MDG}) 
is closely related to  the Bott-Duffin $(p, q)$-inverse. In fact, according to \cite[Proposition 3.4]{G},
this inverse coincides with the   Bott-Duffin $(p, 1-q)$-inverse.  \par

\indent In the article \cite{D} two other outer inverses which are related to the $(b,c)$-inverse were considered.
Next their definitions will be recalled.

\begin{df}[{\cite[Definitions 6.2-6.3]{D}}]\label{def10}Let $\R$ be a ring with unity and consider $a$, $b$, $c \in \R$. \par
\noindent \textnormal{(i).} The element $y\in\R$ will be said to be 
an annihilator $(b,c)$-inverse of $a$, if 
$$
y=yay, \hskip.3truecm y_{-1}(0)=b_{-1}(0), \hskip.3truecm y^{-1}(0)= c^{-1}(0).
$$
\noindent \textnormal{(ii).} The element $y\in\R$ will be said to be a hybrid $(b,c)$-inverse of $a$, if 
$$
y=yay, \hskip.3truecm y\R=b\R, \hskip.3truecm y^{-1}(0)= c^{-1}(0).
$$
\end{df}

\indent In the same conditions of Definition \ref{def10}, according to \cite[Theorem 6.4]{D}, there can be at most one annihilator (respectively hybrid) $(b,c)$-inverse of $a\in \R$.
Moreover, it is not difficult to prove that if $a$ has a $(b,c)$-inverse, then $a$ has a hybrid inverse, which in turn implies that $a$ has an annihilator inverse.\par

\indent The last generalized inverse  in the context of rings that it is necessary to recall for this article is the group inverse.
Let $\R$ be a ring with unity and consider $a\in\R$. The element $a$ will be said 
to be \it group invertible, \rm if there exists $b\in\R$ such that 
$$
a=aba, \hskip.3truecm  b=bab, \hskip.3truecm ab=ba.
$$
\noindent It is well known that the group inverse is unique, if it exists. In that case, the group inverse of $a\in\R$ will be denoted by
$a^\sharp$. Note that $aa^\sharp$, $a^\sharp a\in\R^\bullet$ and 
\begin{align*}
a\R&=aa^\sharp\R=a^\sharp a\R=a^\sharp \R,\\
 \R a&=\R a^\sharp a=\R aa^\sharp =\R a^\sharp,\\
a^{-1}(0)&=(a^\sharp a)^{-1}(0)=(aa^\sharp )^{-1}(0)=(a^\sharp)^{-1}(0),\\
a_{-1}(0)&=(aa^\sharp )_{-1}(0)=(a^\sharp a)_{-1}(0)=(a^\sharp)_{-1}(0),\\
\R&=a\R \oplus a^{-1}(0)=\R a\oplus a_{-1}(0).
\end{align*}

\indent In the Banach context, other generalized inverses need to be recalled. To this end, however,
some preparation is needed.\par

\indent From now on, $\A$ will denote a unitary Banach algebra with unit $1$. Given $a\in\A$, $\sigma (a)$ will stand for the spectrum of $a$ and when $\lambda\in\mathbb{C}$, $\lambda 1$ will be
written simply as $\lambda$. In addition, $\X$ will denote a Banach space and $\L (\X)$ the Banach algebra of all
linear and bounded maps defined on and with values in $\X$.  If $T\in\L (\X)$, then $N(T)$ and $R(T)$ will stand for 
the null space and the range of the operator $T$, respectively. In particular, when  $\A$ is a Banach algebra and $x\in \A$,  the operators $L_x\colon \A\to \A$ and $R_x\colon \A\to \A$ 
are the maps defined as follows: given $z\in \A$, $L_x(z)= xz$ and $R_x (z)= zx$, respectively. Note that  the identity operator defined on the 
Banach space $\X$ will be denoted by $I\in\L(\X)$.\par

\indent The following generalized inverse was introduced in the context
of Banach algebras in \cite{CX}. Although the approach in this latter article is different from the one in \cite{D}, the generalized inverse
introduced in \cite{CX} is related to the $(b,c)$-inverse (see  section 7). \par

\begin{df}[{\cite[Definition 2.10]{CX}}]\label{def21}Let $\A$ be a Banach algebra and consider $a\in\A$ and $p$, $q,\in\A^\bullet$. The element $y\in\A$
will be said to be the $(p, q, l)$-outer inverse of $a$, if the following identities hold:
$$
y=yay, \hskip.3truecm y\A=p\A, \hskip.3truecm y^{-1}(0)=q\A.
$$
In this case, this outer inverse will be denoted by $a^{(2,\hbox{ }l)}_{p,\hbox{ }q}$.
\end{df}

\indent Note that, in the same conditions of Definition \ref{def21}, the $(p,q,l)$-outer inverse coincides with the hybrid $(p, 1-q)$-inverse. \par

\indent One of the most studied generalized inverses is the $A^{(2)}_{T,\hbox{ }S}$ outer inverse. This generalized inverse was studied for matrices and for operators defined on Hilbert and on Banach spaces. 
Since in this article  Banach space operators will be considered, the definition of the aformentioned outer inverse will be given in the Banach frame. Recall that this inverse
is unique, when it exists (see \cite[Lemma 1]{LYZW}).\par  

\begin{df}\label{def30}Let $\X$ be a Banach space, $A\in\L (\X)$ and $T$ and $S$ two closed subspaces of $\X$. If there exists a necessarily unique operator $B\in\L (\X)$ such that
$B$ is an outer inverse of $A$ and $R(B)=T$ and $N(B)=S$, then $B$ will be said to be the $A^{(2)}_{T,\hbox{ }S}$ outer inverse of $A$. 
\end{df}
\indent According to \cite[Lemma 1]{LYZW}, necessary and sufficient for the existence of the $A^{(2)}_{T,\hbox{ }S}$ outer inverse of $A$ is that
$T$ and $S$ are complemented subspaces of $\X$, $A\mid_T^{A(T)}\colon T\to A(T)$ is invertible and $A(T)\oplus S=\X$. In particular, using this latter decomposition,
$A^{(2)}_{T,\hbox{ }S}$ is the following operator: $A^{(2)}_{T,\hbox{ }S}\mid_S=0$, $(A\mid_T^{A(T)})^{-1}=A^{(2)}_{T,\hbox{ }S}\mid^T_{A(T)}\colon A(T)\to T$.
To learn more properties of the $A^{(2)}_{T,\hbox{ }S}$ outer inverse in Banach spaces, see \cite{LYZW, YW}.

\section{Further equivalent conditions}

\noindent In this section new conditions that are equivalent to the ones in Definition \ref{def1} will be given.
First of all note that if $\R$ is a unitary ring and $b$, $c$, $y\in \R$ are such that $y\R=b\R$, then $y_{-1}(0)=b_{-1}(0)$. Similarly from $\R y=\R c$ it can be derived that 
$y^{-1}(0)=c^{-1}(0)$. These results will be used in the following theorem.  
\par

\begin{thm}\label{thm11}Let $\R$ be a unitary ring and consider $a\in \R$. Let $y\in\R$ be an outer inverse of $a$.
Then, the following statements are equivalent.\par
\noindent \textnormal{(i)} The element  $y$ is the $(b, c)$-inverse of $a$.\par
\noindent  \textnormal{(ii)}  $\R y=\R c$, $y\R\subseteq b\R$ and $y_{-1} (0)\subseteq b_{-1}(0)$.\par
\noindent \textnormal{(iii)} $y\R=b R$, $\R y\subseteq \R c$ and $y^{-1}(0)\subseteq c^{-1}(0)$.\par
\noindent\rm  \textnormal{(iv)} $\R y\subseteq \R c$, $y\R\subseteq b\R$,   $y_{-1} (0)\subseteq b_{-1}(0)$
and $y^{-1}(0)\subseteq c^{-1}(0)$.\par
\noindent In addition, if $b$, $c\in \hat{\R}$, then the following statements are equivalent to statement \textnormal{(i)}.\par
\noindent  \textnormal{(v)} $\R y=\R c$, $b\R\subseteq y\R$ and $b_{-1} (0)\subseteq y_{-1}(0)$.\par
\noindent  \textnormal{(vi)} $y\R=b R$, $\R c\subseteq \R y$ and $c^{-1}(0)\subseteq y^{-1}(0)$.\par
\noindent  \textnormal{(vii)} $\R y= \R c$,   $y_{-1} (0)= b_{-1}(0)$.\par
\noindent  \textnormal{(viii)} $\R y\subseteq \R c$,  $b\R\subseteq y\R$, $y^{-1}(0)\subseteq c^{-1}(0)$
and $b_{-1} (0)\subseteq y_{-1}(0)$.\par
\noindent  \textnormal{(ix)} $\R c\subseteq \R y$, $y\R\subseteq b\R$, $y_{-1} (0)\subseteq b_{-1}(0)$
and $c^{-1}(0)\subseteq y^{-1}(0)$.\par
\noindent  \textnormal{(x)}  $\R c\subseteq\R y$, $b\R\subseteq y\R$, $c^{-1}(0)\subseteq y^{-1}(0)$ and
$b_{-1} (0)\subseteq y_{-1}(0)$.\par
\noindent  \textnormal{(xi)}   $y\R=b R$, $y^{-1}(0)= c^{-1}(0)$.\par
\noindent  \textnormal{(xii)} $\R y\subseteq \R c$,  $y^{-1}(0)\subseteq c^{-1}(0)$ and $y_{-1} (0)= b_{-1}(0)$.\par
\noindent  \textnormal{(xiii)}  $\R c\subseteq\R y$, $c^{-1}(0)\subseteq y^{-1}(0)$ and $y_{-1} (0)= b_{-1}(0)$.\par
\noindent  \textnormal{(xiv)}  $b\R\subseteq y\R$, $b_{-1} (0)\subseteq y_{-1}(0)$ and $y^{-1} (0)= c^{-1}(0)$.\par
\noindent  \textnormal{(xv)} $y\R\subseteq b\R$, $y_{-1} (0)\subseteq b_{-1}(0)$ and $y^{-1} (0)= c^{-1}(0)$.\par
\noindent  \textnormal{(xvi)}  $y^{-1} (0)= c^{-1}(0)$ and $y_{-1} (0)= b_{-1}(0)$.\par

\end{thm}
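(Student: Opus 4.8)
The strategy is to prove everything by showing a web of implications that funnels back to condition (i), using Remark~\ref{rema3} (specifically the characterization via $y=yay$, $y\R=b\R$, $\R y=\R c$) as the hub. The key observation, already highlighted before the statement, is the one-directional passage from ideal equalities to annihilator equalities: if $y\R=b\R$ then $y_{-1}(0)=b_{-1}(0)$, and if $\R y=\R c$ then $y^{-1}(0)=c^{-1}(0)$. Conversely, under regularity, annihilator containments upgrade to ideal containments: this is where $b,c\in\hat\R$ is needed, and it explains why (v)--(xvi) require that hypothesis while (i)--(iv) do not. I would isolate this upgrade as the first lemma-like step: \emph{if $b$ is regular with inner inverse $b^-$, then $b_{-1}(0)\subseteq y_{-1}(0)$ together with (say) $y\R\subseteq b\R$ forces $y\R=b\R$}; the point is that $1-bb^-$ annihilates $b$ on the right in the sense that $(1-b^-b)$ lies in $b^{-1}(0)$ relationships, so controlling the annihilator of $y$ by that of $b$ lets one write elements of $b\R$ back inside $y\R$. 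Dually for $c$ and left ideals.

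First I would dispatch (i)$\Leftrightarrow$(ii)$\Leftrightarrow$(iii)$\Leftrightarrow$(iv) without any regularity assumption. From (i), Remark~\ref{rema3} gives $y\R=b\R$ and $\R y=\R c$, whence $y_{-1}(0)=b_{-1}(0)$ and $y^{-1}(0)=c^{-1}(0)$; this immediately yields (ii), (iii), (iv). For the reverse directions, note that each of (ii), (iii), (iv) asserts enough to recover $y\R=b\R$ and $\R y=\R c$: in (ii) we are handed $\R y=\R c$ outright and must produce $b\R\subseteq y\R$ from $y\R\subseteq b\R$ and $y_{-1}(0)\subseteq b_{-1}(0)$ --- but this is exactly the annihilator-to-ideal upgrade applied to $b$, and here it works \emph{even without} regularity of $b$ because $y$ being an outer inverse of $a$ forces $y$ to be regular (with $a$ as an inner inverse, $y=yay$), so one instead upgrades using regularity of $y$: from $yay=y$ one gets $1-ya\in y_{-1}(0)\subseteq b_{-1}(0)$, i.e.\ $b=bya$, so $b=b(ya)\in \R ya\subseteq$ \dots hmm, more carefully $b\R = bya\R\subseteq y\R$. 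This is the crux computation for (ii)$\Rightarrow$(i), and the symmetric one handles (iii)$\Rightarrow$(i); then (iv)$\Rightarrow$(ii) (and $\Rightarrow$(iii)) because (iv) contains the hypotheses of both halves. Once $y\R=b\R$ and $\R y=\R c$ and $y=yay$ are in hand, Remark~\ref{rema3} closes the loop to (i).

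For the equivalences involving (v)--(xvi) I would now switch on $b,c\in\hat\R$ and argue cyclically, grouping the sixteen-ish conditions by which two of the four data (the two ideal relations for $b$, $c$ and the two annihilator relations) they pin down. The recurring moves are: (a) ideal equality $\Rightarrow$ annihilator equality (free); (b) under regularity of $b$, the pair ``$b\R\subseteq y\R$ and $b_{-1}(0)\subseteq y_{-1}(0)$'' is equivalent to ``$y\R= b\R$'' once we also know $y\R\subseteq b\R$ or can derive it, and the annihilator containment $b_{-1}(0)\subseteq y_{-1}(0)$ combined with regularity of $y$ (outer inverse!) gives $y\R\subseteq b\R$ --- because $1-yb^-b^{\phantom{-}}$\dots again one writes $y=yay$ and uses that $a$ controls things; (c) the dual statements for $c$ with left ideals and left annihilators. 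Concretely I would show (i)$\Rightarrow$(v)$\Rightarrow$(vii)$\Rightarrow$(xvi)$\Rightarrow$(i) as one cycle, then (i)$\Rightarrow$(vi)$\Rightarrow$(xi)$\Rightarrow$(i), and pick off the remaining conditions (viii), (ix), (x), (xii), (xiii), (xiv), (xv) each by the observation that each one contains, as a sub-collection of its hypotheses, enough to match one of the already-established conditions after a single application of move (b) or (c). For instance (xvi) ($y^{-1}(0)=c^{-1}(0)$ and $y_{-1}(0)=b_{-1}(0)$) implies (xi): from $y_{-1}(0)=b_{-1}(0)$ and regularity of $y$ one gets $b=bya$ hence $b\R\subseteq y\R$, and from $y^{-1}(0)=c^{-1}(0)$ with regularity of $c$ one gets $y\R\subseteq b\R$ by a counting/complement argument --- no, more directly, use $1-b^-b\in b^{-1}(0)$ and push through --- so $y\R=b\R$, which with the first annihilator equality being automatic gives (xi), already known equivalent to (i).

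The main obstacle, and the one place I would be careful, is exactly this annihilator-to-ideal upgrade: showing that (for regular $b$) a one-sided annihilator containment $b_{-1}(0)\subseteq y_{-1}(0)$ plus the regularity of $y$ as an outer inverse of $a$ forces the corresponding one-sided ideal containment $y\R\subseteq b\R$ (or the reverse, depending on direction). The clean way is to fix $b^-\in b\{1\}$ and $y$'s inner inverse $a$, write $e=b^-b\in\R^\bullet$ and $f=ya\in\R^\bullet$, translate each ideal/annihilator condition on $y$ and $b$ into a statement comparing the idempotents $e$ and $f$ (e.g.\ $y\R\subseteq b\R \iff (1-e)\,\text{-ish condition}$, $b_{-1}(0)\subseteq y_{-1}(0)\iff$ a condition forcing $ef=e$ or $fe=f$), and then note that for idempotents $e,f$ one has $e\R\subseteq f\R \iff fe=e$ while $\R(1-f)\subseteq \R(1-e)\iff \dots$ --- the standard dictionary between idempotents and the complemented one-sided ideals / annihilators they generate. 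Once that dictionary is written out once, every implication among (i)--(xvi) becomes a short manipulation of idempotents, and the proof is essentially bookkeeping; I would therefore spend most of the write-up establishing that dictionary carefully and then present the cycles of implications compactly, remarking that the regularity of $b$ and $c$ is used precisely to make $b^-b$ and $cc^-$ available as the idempotents witnessing the one-sided ideals $b\R$ and $\R c$.
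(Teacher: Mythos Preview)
Your approach is essentially the same as the paper's: both reduce everything to the characterization $y=yay$, $y\R=b\R$, $\R y=\R c$, and both hinge on the two ``upgrade'' moves (annihilator containment $\Rightarrow$ ideal containment) --- one using only regularity of $y$ (for (ii)--(iv)) and one using regularity of $b$ or $c$ (for (v)--(xvi)). The paper simply cites these upgrades from \cite[Proposition~3.1]{bb1} rather than deriving them, whereas you re-derive them from scratch via the idempotents $ya$, $ay$, $bb^-$, $c^-c$; your version is more self-contained but the mathematics is identical.

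One small slip to fix: from $1-ya\in y_{-1}(0)\subseteq b_{-1}(0)$ you get $(1-ya)b=0$, i.e.\ $b=yab$ (not $b=bya$), and then $b\R=yab\R\subseteq y\R$; the conclusion you reached is right but the intermediate identity was written on the wrong side. The dual computation for (iii) uses $1-ay\in y^{-1}(0)\subseteq c^{-1}(0)$, giving $c=cay$ and $\R c\subseteq \R y$. With these two one-line facts (and their regular counterparts $1-bb^-\in b_{-1}(0)$, $1-c^-c\in c^{-1}(0)$) in hand, the idempotent ``dictionary'' you sketch at the end is unnecessary --- each of (v)--(xvi) reduces directly to (i) after one or two applications of the four upgrade facts, exactly as the paper does.
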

\begin{proof} According to \cite[Proposition 6.1]{D} and the observation at the beginning of this section, it is clear that statement (i) implies all the other statements.\par
\indent On the other hand, to prove that statements (ii)-(iv) imply that $y$ is the $(b,c)$-inverse of $a$,
note that according to \cite[Proposition 3.1 (i)]{bb1} (respectively  \cite[Proposition 3.1 (ii)]{bb1}), if $y^{-1}(0)\subseteq c^{-1}(0)$ 
(respectively if $y_{-1}(0)\subseteq b_{-1}(0)$), then $\R c\subseteq \R y$
(respectively $b\R\subseteq y\R$).\par
\indent Suppose that $b$ and $c$ are regular. Then, according to \cite[Proposition 3.1 (iii)]{bb1}
(respectively \cite[Proposition 3.1 (iv)]{bb1}), if $c^{-1}(0)\subseteq y^{-1}(0)$ 
(respectively if  $b_{-1}(0)\subseteq y_{-1}(0)$), then $\R y \subseteq \R c$
(respectively $y\R\subseteq b\R$). To conclude the proof, apply these results and the ones
used to prove the equivalence among statements (i)-(iv).
\end{proof}

\indent Note that statements  (xi) and (xx) in Theorem \ref{thm11} correspond to the conditions of the
hybrid and the annihilator $(b, c)$-inverse of $a$, respectively. The following corollary can be 
easily deduced from the previous theorem.\par

\begin{cor}\label{coro12} Let $\R$ be a unitary ring and consider $a\in\R$ and $b$, $c\in\hat{\R}$. Then,
the following statements are equivalent.\par
\noindent \textnormal{(i)} The element $a$ has a $(b,c)$-inverse.\par
\noindent \textnormal{(ii)} The element $a$ has an annihilator $(b,c)$-inverse.\par
\noindent \textnormal{(iii)} The element $a$ has a hybrid $(b,c)$-inverse.\par
\noindent Furthermore, in this case the inverses in statements \textnormal{(i)-(iii)}
coincide.
\end{cor}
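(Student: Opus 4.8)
The plan is to deduce the corollary directly from Theorem~\ref{thm11}, using the observation (recorded just before the corollary) that statements (xi) and (xvi) of that theorem are precisely the defining conditions of the hybrid and the annihilator $(b,c)$-inverse of $a$. The first step is to note that each of the three inverses appearing in (i)--(iii) is, by its own definition, an outer inverse of $a$: for the hybrid and annihilator inverses this is the identity $y=yay$ built into Definition~\ref{def10}, and for the $(b,c)$-inverse it is \cite[Theorem~2.1~(ii)]{D}. Hence, whichever of the three is assumed to exist, the corresponding element $y$ falls under the standing hypothesis of Theorem~\ref{thm11} that $y$ is an outer inverse of $a$, and since $b,c\in\hat{\R}$ we have at our disposal the full list of equivalent statements (i)--(xvi) of that theorem.

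The implications (i)$\Rightarrow$(iii) and (i)$\Rightarrow$(ii) are then immediate: if $y=a^{-(b,\hbox{ }c)}$, then $y$ satisfies statement (i) of Theorem~\ref{thm11}, hence also statements (xi) and (xvi), which say exactly that $y$ is a hybrid, respectively an annihilator, $(b,c)$-inverse of $a$. Conversely, if $y$ is a hybrid $(b,c)$-inverse of $a$, then $y$ is an outer inverse of $a$ satisfying statement (xi) of Theorem~\ref{thm11}; since $b,c\in\hat{\R}$, statement (i) follows, so $y=a^{-(b,\hbox{ }c)}$, which proves (iii)$\Rightarrow$(i). The implication (ii)$\Rightarrow$(i) is identical, now using statement (xvi) in place of (xi). Putting these together yields the equivalence of (i), (ii) and (iii); moreover the argument shows that whenever any of them holds, every hybrid $(b,c)$-inverse and every annihilator $(b,c)$-inverse of $a$ equals $a^{-(b,\hbox{ }c)}$, which is the ``furthermore'' clause (and incidentally re-proves the uniqueness of these two inverses asserted by \cite[Theorem~6.4]{D}).

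I expect the corollary itself to be essentially bookkeeping once Theorem~\ref{thm11} is in hand; the genuine content---and the place where the hypothesis $b,c\in\hat{\R}$ is actually used---lives inside the proof of Theorem~\ref{thm11}, in the passage from statements (xi) and (xvi) back to (i). There one must upgrade the annihilator/kernel inclusions $c^{-1}(0)\subseteq y^{-1}(0)$ and $b_{-1}(0)\subseteq y_{-1}(0)$ to the ideal inclusions $\R y\subseteq\R c$ and $y\R\subseteq b\R$, and this is exactly what \cite[Proposition~3.1~(iii)--(iv)]{bb1} provides under regularity of $c$ and $b$. Without that regularity the reverse implications fail in general, which is why the corollary is stated for regular $b$ and $c$.
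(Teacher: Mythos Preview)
Your proof is correct and follows essentially the same route as the paper's: both deduce the equivalence from the equivalence of statements (i), (xi) and (xvi) in Theorem~\ref{thm11} (the paper writes ``(xx)'' where (xvi) is meant). Your treatment of the ``furthermore'' clause is slightly more self-contained---you extract the coincidence of the three inverses directly from the implications, whereas the paper appeals to the uniqueness results \cite[Theorem~2.1]{D} and \cite[Theorem~6.4]{D}---but the underlying argument is the same.
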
 
\begin{proof} First apply the equivalence among statements (i), (xi) and (xx) in Theorem \ref{thm11}
and then apply \cite[Theorem 2.1]{D} and \cite[Theorem 6.4]{D}.
\end{proof}

\noindent Let $\R$ be a unitary ring and consider $a$, $b\in \R$ and $c\in\hat{\R}$ such that
the hybrid $(b,c)$-inverse of $a$ exists. Note that accoding to the proof of Theorem \ref{thm11},
the $(b,c)$-inverse of $a$ exists and it coincides with the hybrid $(b,c)$-inverse of $a$.
In particular, according to Remark \ref{rema3} (i), $b\in\hat{\R}$. 

\section{The set of $(b, c)$-invertible elements} 

\noindent The main objective of this section is to study the set of all $(b, c)$-invertible elements.
In first place a characterization of the $(b,c)$-inverses will be given.\par

\begin{thm}\label{thm4} Let $\R$ be a ring with unity and consider $b$, $c\in\hat{\R}$. Consider  $a\in\R$, $g\in b\{1\}$ and $h\in c\{1\}$.  
Then, the following conditions are equivalent.\par
\noindent {\rm (i)} The element $a$ is $(b,c)$-invertible.\par
\noindent {\rm (ii)} There exists  $z\in b\R c$ such that 
$$
bg=zhcabg, \,\,\,\,\, hc=hcabgz.
$$
Furthermore, in this case $z= a^{-(b,\hbox{ }c)}$.
\end{thm}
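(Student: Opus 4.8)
The plan is to prove the two implications separately, using the characterizations already established. For (i)$\Rightarrow$(ii), suppose $a$ is $(b,c)$-invertible and set $z=a^{-(b,\hbox{ }c)}$. By Remark \ref{rema3}~(iii)-(iv) we know $z\in b\R c$, and in fact $z=bg'z=zh'c$ where $g'=gbg\in b\{1\}$ and $h'=hch\in c\{1\}$; moreover $b=zab$ and $c=caz$ by Definition \ref{def1}~(ii). The idea is to massage these identities into the desired form by inserting the inner inverses $g$ and $h$. For instance, starting from $b=zab$ and left-multiplying by $bg$ (using $bg b=b$) gives $bg=bgb=bg\,zab$, and then one wants to reintroduce an $hc$ factor; since $z=zh'c=zhch c$ and more usefully $cab=cabgb=(caz)(ab)$, the plan is to compute $zhcabg$ and show it collapses to $bg$ by repeatedly using $zab=b$, $bgb=b$, $chc=c$, $caz=c$ and $z=bg'z$. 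The symmetric computation handles $hc=hcabgz$.

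For (ii)$\Rightarrow$(i), I would take the given $z\in b\R c$ satisfying $bg=zhcabg$ and $hc=hcabgz$, and produce from it an outer inverse of $a$ meeting one of the equivalent conditions of Theorem \ref{thm11}. The natural candidate for the $(b,c)$-inverse is not $z$ itself but rather something like $y=bgz$ or $y=zhc$ or $y=bgzhc$; one should check these coincide using $z\in b\R c$ (write $z=btc$, so $bgz=bgbtc=btc=z$ and similarly $zhc=z$, hence $y=z$ is already "normalized" on both sides). Then the first relation $bg=zhcabg$ together with $z=bgz$ and $b=bgb$ should yield $b=zab$ after right-multiplying by $b$ and simplifying, and dually $hc=hcabgz$ should give $c=caz$. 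It remains to verify $y=z$ is an outer inverse of $a$, i.e. $zaz=z$: from $b=zab$ one gets $z=btc$, so $zaz=z(ab)(t c)\cdots$ — more directly, $z=bgz=bg\cdot z$ and using $bg=zhcabg$ we get $z=bgz=zhcabgz=z(hc)(abgz)=z\,hc=z$ by the second relation, wait — rather $zhcabgz=z(hcabgz)=z\,hc$ which is $z$; so $z=zhcabgz=za\,(bgz)\cdot$ — the cleanest route is $z=bgz=(zhcabg)z=zhca(bgz)=zhcaz$, and then since $hc=hcabgz$ we substitute to get $z=zhcaz=z(hcabgz)az=\dots$; I will organize this so that $zaz=z$ drops out, after which $z\R=b\R$ and $\R z=\R c$ follow from $z=btc$ with $b=zab$, $c=caz$. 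By Remark \ref{rema3} (the $y\R=b\R$, $\R y=\R c$, $yay=y$ characterization) this gives that $z=a^{-(b,\hbox{ }c)}$.

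The main obstacle I anticipate is the bookkeeping in (ii)$\Rightarrow$(i): one has only the two "sandwiched" identities and must extract both the outer-inverse relation $zaz=z$ and the two defining relations $b=zab$, $c=caz$ without accidentally assuming regularity of $a$ or uniqueness prematurely. The key leverage is that $z\in b\R c$ forces $bgz=z=zhc$ and $b=bgb$, $c=chc$, so the inner inverses $g,h$ act as local identities on the relevant ideals; every $bg$ or $hc$ appearing can be slid past $z$. Once that is internalized, each of the two hypothesis-identities yields one of the two defining equations (after multiplying by $b$ on the right, resp. $c$ on the left, and cancelling via $bgb=b$, $chc=c$), and $zaz=z$ comes from substituting one identity into the other. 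The converse direction (i)$\Rightarrow$(ii) is comparatively routine given Remark \ref{rema3}.
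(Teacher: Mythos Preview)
Your proposal is essentially correct and follows the same core observations as the paper: the crucial point in both directions is that $b\R c = bg\,\R\,hc$, so any $z\in b\R c$ automatically satisfies $z = bgz = zhc$, which lets you slide the idempotents $bg$ and $hc$ in and out at will.

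Two remarks on where you can streamline.

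\textbf{(i)$\Rightarrow$(ii).} Your plan is right but overcomplicated. Once you know $z=a^{-(b,\hbox{ }c)}$ satisfies $z=zhc$ and $b=zab$ (Definition~\ref{def1}(ii)), you get $bg=zabg=(zhc)abg=zhcabg$ in one line; no ``massaging'' beyond that is needed. The other identity is symmetric using $z=bgz$ and $c=caz$.

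\textbf{(ii)$\Rightarrow$(i).} Here you diverge slightly from the paper. You aim to verify the characterization $zaz=z$, $z\R=b\R$, $\R z=\R c$ (Remark~\ref{rema3}, i.e.\ \cite[Proposition~6.1]{D}), and you get tangled computing $zaz=z$: you correctly reach $z=bgz=(zhcabg)z=zhca(bgz)=zhcaz$, but then forget to apply the relation $zhc=z$ that you yourself emphasized --- it gives $z=zhcaz=zaz$ immediately, no further substitution needed. The paper instead verifies Definition~\ref{def1} directly: from $z=bgz$ and $z=zhc$ one has $z\in b\R z\cap z\R c$, and multiplying $bg=zhcabg$ on the right by $b$ (using $bgb=b$ and $zhc=z$) gives $b=zab$; symmetrically $c=caz$. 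This route sidesteps the outer-inverse check entirely, since that is a consequence of Definition~\ref{def1} rather than part of it.
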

\begin{proof} Suppose that $a^{-(b,\hbox{ }c)}$ exists. According to Definition \ref{def1} (i), $a^{-(b,\hbox{ }c)}\in\ b\R c$.
Next note that $b\R c=bg \R hc$. 
In particular, $a^{-(b,\hbox{ }c)}= bg a^{-(b,\hbox{ }c)}=  a^{-(b,\hbox{ }c)}hc$. Thus, according  to Definition \ref{def1} (ii),
$$
bg=a^{-(b,\hbox{ }c)}abg=a^{-(b,\hbox{ }c)} hcabg,\,\,\, hc=hcaa^{-(b,\hbox{ }c)}=hcabg a^{-(b,\hbox{ }c)}.
$$
\indent To prove the converse, note that as before, since $z\in b\R c=bg\R hc$, $z=bgz=zhc$. In particular,
$z\in b\R z$ and $z\in z\R c$. In addition,
$$
b=bgb=zhcabgb=zab, \,\,\,\,  c=chc= chcabgz=caz.
$$
\noindent Therefore, $z$ satisfies the conditions of Definition \ref{def1}.
\end{proof}

\indent From Theorem \ref{thm4} new $(b,c)$-invertible elements can be constructed.
This will be done in the following corollaries.\par

\begin{cor}\label{cor13}Let $\R$ be a ring with unity and consider $a\in\R$, $b$, $c\in\hat{\R}$, $g\in b\{1\}$ and $h\in c\{1\}$.  Then, the following statements are equivalents.\par
\noindent {\rm (i)} $a$ is $(b,c)$-invertible.\par
\noindent {\rm (ii)}  $hca$ is $(b,c)$-invertible.\par
\noindent {\rm (iii)}  $abg$ is $(b,c)$-invertible.\par
\noindent {\rm (iv)}  $hcabg$ is $(b,c)$-invertible.\par
\noindent Furthermore, in this case, 
$$
a^{-(b,\hbox{ }c)}=(hca)^{-(b,\hbox{ }c)}=(abg)^{-(b,\hbox{ }c)}=(hcabg)^{-(b,\hbox{ }c)}.
$$
\end{cor}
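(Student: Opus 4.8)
The plan is to reduce everything to the characterization obtained in Theorem~\ref{thm4}. First I would record the elementary fact that $bg$ and $hc$ are idempotents: since $g\in b\{1\}$ we have $bgb=b$, whence $(bg)^2=(bgb)g=bg$, and since $h\in c\{1\}$ we have $chc=c$, whence $(hc)^2=h(chc)=hc$.

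The key observation is that conditions (ii) of Theorem~\ref{thm4} involve the element $a$ only through the product $hc\,a\,bg$. Using the idempotencies just noted, I would compute, for each of the three elements $hca$, $abg$, $hcabg$, the corresponding sandwiched product: $hc(hca)bg=(hc)^2a\,bg=hcabg$, $hc(abg)bg=hca(bg)^2=hcabg$, and $hc(hcabg)bg=(hc)^2a(bg)^2=hcabg$. Consequently, for a fixed $z\in b\R c$, the pair of equations $bg=z\,hc\,\tilde a\,bg$ and $hc=hc\,\tilde a\,bg\,z$ holds for $\tilde a=a$ if and only if it holds for $\tilde a=hca$, for $\tilde a=abg$ and for $\tilde a=hcabg$; indeed after substitution these are literally the same equations.

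Then I would apply Theorem~\ref{thm4} in turn to the elements $a$, $hca$, $abg$ and $hcabg$, each considered with the \emph{same} $b$, $c\in\hat\R$, $g\in b\{1\}$ and $h\in c\{1\}$ (these hypotheses are unaffected by replacing $a$ by any of the other three). By that theorem, each of the statements (i)--(iv) is equivalent to the existence of $z\in b\R c$ satisfying the common pair of equations displayed above, so the four statements are equivalent. Moreover, the final clause of Theorem~\ref{thm4} identifies the witnessing $z$ with the relevant $(b,c)$-inverse, which yields $a^{-(b,\hbox{ }c)}=(hca)^{-(b,\hbox{ }c)}=(abg)^{-(b,\hbox{ }c)}=(hcabg)^{-(b,\hbox{ }c)}$.

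I do not expect any real obstacle here; the only points requiring care are verifying that $bg$ and $hc$ are idempotent and carrying out the three substitutions correctly, together with the observation that the data $b$, $c$, $g$, $h$ entering Theorem~\ref{thm4} are unchanged under the passage from $a$ to $hca$, $abg$ or $hcabg$.
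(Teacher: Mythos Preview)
Your proof is correct and follows essentially the same approach as the paper: both reduce the claim to Theorem~\ref{thm4} via the key identity $hcabg=hc(hca)bg=hc(abg)bg=hc(hcabg)bg$, which shows that condition~(ii) of that theorem is literally the same for all four candidate elements. Your additional care in explicitly verifying that $bg$ and $hc$ are idempotent is a reasonable expansion of what the paper leaves implicit.
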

\begin{proof} Note that 
$$
hcabg= hc(hca)bg= hc(abg)bg=hc(hcabg)bg.
$$
Now, necessary and sufficient for $a$ to be $(b,c)$-invertible is that there exists $z\in \R$ such that
Theorem \ref{thm4} (ii) holds. However, this condition is equivalent to the existence of the outer inverses in statements (ii)-(iv).
Moreover, in this case, all the outer inverses considered in statements (i)-(iv) coincide with $z$.
\end{proof}

\begin{cor}\label{cor5}Let $\R$ be a ring with unity and consider $b$, $c\in\hat{\R}$. Let $a\in\R$ be such that 
$a^{-(b,\hbox{ }c)}$ exists and consider $m\in hc\R (1-bg) + (1-hc)\R$, where $g\in b\{1\}$ and $h\in c\{1\}$. Then, $a+m$ is $(b,c)$-invertible. Moreover, $(a+m)^{-(b,\hbox{ }c)}=a^{-(b,\hbox{ }c)}$.
\end{cor}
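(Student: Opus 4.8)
The plan is to reduce everything to Theorem~\ref{thm4}, showing that the \emph{same} element $z:=a^{-(b,\hbox{ }c)}$ serves as the $(b,c)$-inverse of $a+m$. First I would recall that, since $a^{-(b,\hbox{ }c)}$ exists, Theorem~\ref{thm4} provides $z=a^{-(b,\hbox{ }c)}\in b\R c$ together with the identities $bg=z\,hca\,bg$ and $hc=hc\,abg\,z$. Consequently, to verify condition (ii) of Theorem~\ref{thm4} for $a+m$, it is enough to prove that the perturbation term is annihilated on the relevant side, namely that
$$
hc\,m\,bg=0.
$$
Granting this, one gets $z\,hc(a+m)bg=z\,hca\,bg+z\,(hc\,m\,bg)=bg$ and $hc(a+m)bg\,z=hc\,abg\,z+(hc\,m\,bg)\,z=hc$, and since moreover $z\in b\R c$, Theorem~\ref{thm4} yields that $a+m$ is $(b,c)$-invertible with $(a+m)^{-(b,\hbox{ }c)}=z=a^{-(b,\hbox{ }c)}$.

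The key elementary fact is that $bg$ and $hc$ are idempotents: from $g\in b\{1\}$, i.e.\ $bgb=b$, we get $(bg)^2=(bgb)g=bg$, and from $h\in c\{1\}$, i.e.\ $chc=c$, we get $(hc)^2=h(chc)=hc$. Hence $hc(1-hc)=0$ and $(1-bg)bg=0$. Writing $m=hc\,r\,(1-bg)+(1-hc)\,s$ with $r,s\in\R$, I would then compute
$$
hc\,m\,bg=(hc)^2\,r\,(1-bg)bg+\bigl(hc(1-hc)\bigr)\,s\,bg=(hc)\,r\,\bigl((1-bg)bg\bigr)+0=0,
$$
which is exactly the required identity.

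The argument is a routine verification, so there is no genuine obstacle; the only real step is recognising which tool to use, namely that Theorem~\ref{thm4} reduces $(b,c)$-invertibility of $a+m$ (with the prescribed value of the inverse) to the single equation $hc\,m\,bg=0$, and that the hypothesis $m\in hc\R(1-bg)+(1-hc)\R$ makes this equation immediate once one notes that $bg$ and $hc$ are idempotents.
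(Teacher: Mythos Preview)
Your proof is correct and follows essentially the same approach as the paper: both reduce to showing $hc\,m\,bg=0$ (equivalently $hc(a+m)bg=hcabg$) and then invoke Theorem~\ref{thm4} with $z=a^{-(b,\hbox{ }c)}$. You spell out the idempotency of $bg$, $hc$ and the computation of $hc\,m\,bg$ in more detail, but the argument is the same.
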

\begin{proof}Note that $hc(a+m)bg=hcabg$. Then, as in the proof of Corollary \ref{cor13}, there exists $z\in\R$ that satisfies  Theorem \ref{thm4} (ii).
Now, this condition is also equivalent to the $(b, c)$-invertibility of $a+m$. Furthermore, both outer inverses under consideration coincide.
\end{proof}

\begin{cor}\label{cor14} Let $\R$ be a ring with unity and consider $a\in\R$ and $b$, $c\in\hat{\R}$
such that $a$ is $(b, c)$-invertible. Let $g\in b\{1\}$ and $h\in c\{1\}$. Then, the following statements hold.\par
\noindent {\rm (i)} $a^{-(b,\hbox{ }c)}+m $ is $(hc, bg)$-invertible, where $m\in bg\R (1-hc)+ (1-bg)\R$.\par
\noindent {\rm (ii)} $(a^{-(b,\hbox{ }c)}+m)^{-(hc, \hbox{ }bg)}= hcabg$; in particular, $(a^{-(b,\hbox{ }c)})^{-(hc, \hbox{ }bg)}= hcabg$. 
\end{cor}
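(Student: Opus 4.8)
The plan is to first settle the case $m=0$ by a direct verification of Definition~\ref{def1} and then to bootstrap to arbitrary $m$ by invoking Corollary~\ref{cor5}. Throughout put $z:=a^{-(b,\hbox{ }c)}$ and $w:=hcabg$, and begin by recording a few elementary facts. Since $g\in b\{1\}$ and $h\in c\{1\}$, the elements $bg$ and $hc$ are idempotents ($bg\cdot bg=(bgb)g=bg$ and $hc\cdot hc=h(chc)=hc$), hence they lie in $\hat{\R}$ and each is its own inner inverse. Moreover, because $a$ is $(b,c)$-invertible, Theorem~\ref{thm4} gives $z\in b\R c=bg\R hc$, so that $z=bgz=zhc$, and it also gives the identities $bg=z\,hcabg=zw$ and $hc=hcabg\,z=wz$.

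Next I would prove that $w=hcabg$ is the $(hc,bg)$-inverse of $z$, i.e.\ verify Definition~\ref{def1} with $(a,b,c,y)$ replaced by $(z,hc,bg,w)$. For part (i): from $bg=zw$ one has $w=(hca)bg=(hca)(zw)=(hcaz)w\in hc\R w$, and from $hc=wz$ one has $w=hc(abg)=(wz)(abg)=w(zabg)\in w\R bg$. For part (ii): $wzhc=(wz)hc=hc\cdot hc=hc$ and $bgzw=bg(zw)=bg\cdot bg=bg$. Hence $z$ is $(hc,bg)$-invertible with $z^{-(hc,\hbox{ }bg)}=w=hcabg$; taking $m=0$, this is the ``in particular'' clause of statement (ii).

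For a general $m\in bg\R(1-hc)+(1-bg)\R$ I would apply Corollary~\ref{cor5} to the element $z$, with the data $(b,c,g,h)$ of that corollary replaced by $(hc,bg,hc,bg)$. This substitution is admissible: $hc,bg\in\hat{\R}$, $hc\in(hc)\{1\}$, $bg\in(bg)\{1\}$, and $z^{-(hc,\hbox{ }bg)}$ exists by the previous paragraph. Using the idempotency $(hc)(hc)=hc$ and $(bg)(bg)=bg$, the perturbation set occurring in Corollary~\ref{cor5} for these data, namely (in the notation of that corollary) $h'c'\,\R\,(1-b'g')+(1-h'c')\R$ with $b'=hc$, $g'=hc$, $c'=bg$, $h'=bg$, is precisely $bg\R(1-hc)+(1-bg)\R$, the set in which $m$ is taken. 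Therefore Corollary~\ref{cor5} yields that $z+m=a^{-(b,\hbox{ }c)}+m$ is $(hc,bg)$-invertible and that $(z+m)^{-(hc,\hbox{ }bg)}=z^{-(hc,\hbox{ }bg)}=hcabg$, which are exactly statements (i) and (ii).

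The main obstacle is purely organizational: one must be careful which of $hc$ and $bg$ plays the role of ``$b$'' and which the role of ``$c$'' when feeding the data into Corollary~\ref{cor5}, and must check that the idempotency of $bg$ and $hc$ is exactly what collapses the perturbation set of Corollary~\ref{cor5} to the one appearing in the statement. Everything else reduces to the short identities $z=bgz=zhc$, $bg=zw$, $hc=wz$ from Theorem~\ref{thm4}. If one prefers to avoid Corollary~\ref{cor5}, the same conclusion follows by verifying Definition~\ref{def1} directly for $z+m$ with $y=w$: part (i) is unchanged from the $m=0$ case, and in part (ii) the cross terms vanish because $bg\,m\,hc=0$ for every $m\in bg\R(1-hc)+(1-bg)\R$, whence $w(z+m)hc=wzhc=hc$ and $bg(z+m)w=bgzw=bg$.
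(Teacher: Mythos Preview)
Your proof is correct and rests on the same key identities as the paper's, namely $z\in bg\R hc$, $zw=bg$ and $wz=hc$ from Theorem~\ref{thm4}. The only difference is organizational: the paper handles general $m$ in a single stroke by observing that $bg\,(a^{-(b,\hbox{ }c)}+m)\,hc=a^{-(b,\hbox{ }c)}$ and then applying Theorem~\ref{thm4} directly (with $hc,bg$ in the roles of $b,c$ and $w=hcabg$ as the candidate inverse), whereas you first treat $m=0$ via Definition~\ref{def1} and then invoke Corollary~\ref{cor5} for the perturbation. Your route is slightly longer but entirely sound, and your closing remark about verifying Definition~\ref{def1} directly for $z+m$ is in fact closest in spirit to what the paper does via Theorem~\ref{thm4}.
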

\begin{proof} Note that since $hc$  and $bg$ are indempotents, $hc\in hc\{1\}$ and $bg\in bg\{1\}$.
In addition, since $a^{-(b,\hbox{ }c)}\in b\R c=bg\R hc$ (Theorem \ref{thm4}), 
$bg (a^{-(b,\hbox{ }c)}+m)hc=a^{-(b,\hbox{ }c)}$. In particular, applying Theorem \ref{thm4},
the $(hc, bg)$-inverse of $a^{-(b,\hbox{ }c)}+m $ exists and it coincides with $hcabg$. 
\end{proof}
\indent In the following theorem the set of all $(b,c)$-invertible elements will be characterized 
($b$, $c\in\hat{\R}$). To this end, however, first two sets need to be introduced. Let 
$$
\R^{-(b,\hbox{ }c)}:=\{x\in\R\colon x^{-(b,\hbox{ }c)} \hbox{ exists }\}
$$
and
$$
\R^{-1}_{\{hc, \hbox{ } bg\}}:=\{x\in hc\R bg\colon \hbox{ there exists } z\in b\R c \hbox{ such that }
bg=zx, \hbox{ }hc=xz\},
$$
\noindent where $g\in b\{1\}$ and $h\in c\{1\}$ are arbitrary elements.

\begin{thm}\label{thm6}Let $\R$ be a ring with unity and consider $b$, $c\in\hat{\R}$. Let $g\in b\{1\}$ and $h\in c\{1\}$.
Then,
$$
\R^{-(b,\hbox{ }c)}= \R^{-1}_{\{hc, \hbox{ } bg\}}+ hc\R (1-bg) + (1-hc)\R.
$$
\noindent Furthermore, if $x\in \R^{-1}_{\{hc, \hbox{ } bg\}}$ and $m\in hc\R (1-bg) + (1-hc)\R$, then $(x+m)^{-(b,\hbox{ }c)}= x^{-(b,\hbox{ }c)}\in b\R c$.
\end{thm}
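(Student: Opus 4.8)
The plan is to derive the statement entirely from the characterization of $(b,c)$-inverses in Theorem~\ref{thm4}, the key observation being that, for any $x\in hc\R bg$ and any $m\in hc\R(1-bg)+(1-hc)\R$, compressing $x+m$ by $hc$ on the left and by $bg$ on the right recovers $x$ exactly, i.e. $hc(x+m)bg=x$.

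First I would prove the inclusion $\R^{-(b,\hbox{ }c)}\subseteq \R^{-1}_{\{hc,\hbox{ }bg\}}+hc\R(1-bg)+(1-hc)\R$. Given $a\in\R^{-(b,\hbox{ }c)}$, set $x:=hcabg\in hc\R bg$. By Theorem~\ref{thm4} there is $z\in b\R c$ with $z=a^{-(b,\hbox{ }c)}$, $bg=z\,hcabg=zx$ and $hc=hcabg\,z=xz$, so $x\in\R^{-1}_{\{hc,\hbox{ }bg\}}$. Writing $a=hcabg+hca(1-bg)+(1-hc)a$ exhibits $m:=a-x=hca(1-bg)+(1-hc)a$ as an element of $hc\R(1-bg)+(1-hc)\R$, so that $a=x+m$ is of the desired form.

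For the reverse inclusion together with the ``furthermore'' part, take $x\in\R^{-1}_{\{hc,\hbox{ }bg\}}$ with witness $z\in b\R c$ (so $bg=zx$ and $hc=xz$), and let $m\in hc\R(1-bg)+(1-hc)\R$. Using $hc,bg\in\R^\bullet$, the identities $(1-bg)bg=0$ and $hc(1-hc)=0$, and $x\in hc\R bg$, one gets $hc\,x\,bg=x$ and $hc\,m\,bg=0$, whence $hc(x+m)bg=x$. Thus $z\in b\R c$ satisfies $bg=z\,hc(x+m)bg$ and $hc=hc(x+m)bg\,z$, and Theorem~\ref{thm4}, applied with $x+m$ in place of $a$, shows that $x+m$ is $(b,c)$-invertible with $(x+m)^{-(b,\hbox{ }c)}=z\in b\R c$. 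Specializing to $m=0\in hc\R(1-bg)+(1-hc)\R$ gives $x^{-(b,\hbox{ }c)}=z$, whence $(x+m)^{-(b,\hbox{ }c)}=z=x^{-(b,\hbox{ }c)}\in b\R c$. (Alternatively, once $x$ is seen to be $(b,c)$-invertible, the equality $(x+m)^{-(b,\hbox{ }c)}=x^{-(b,\hbox{ }c)}$ is immediate from Corollary~\ref{cor5}.)

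I do not anticipate any real obstacle: the only points demanding care are the idempotent bookkeeping underlying $hc(x+m)bg=x$ — checking that each ``off-corner'' summand of $m$ is annihilated on the correct side by $hc$ or $bg$ — and recognizing that the single element $z$ serves both as the witness for $x\in\R^{-1}_{\{hc,\hbox{ }bg\}}$ and as the candidate $(b,c)$-inverse supplied to Theorem~\ref{thm4}.
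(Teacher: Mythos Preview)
Your proof is correct and follows essentially the same route as the paper: both directions rest on Theorem~\ref{thm4} together with the observation that $hc(x+m)bg=x$ for $x\in hc\R bg$ and $m\in hc\R(1-bg)+(1-hc)\R$, and the decomposition $a=hcabg+hca(1-bg)+(1-hc)a$. The only cosmetic difference is that the paper first shows $x$ is $(b,c)$-invertible via Theorem~\ref{thm4} and then cites Corollary~\ref{cor5} for $x+m$, whereas you apply Theorem~\ref{thm4} directly to $x+m$ and specialize to $m=0$ afterward --- which amounts to inlining the content of Corollary~\ref{cor5}, as you yourself note.
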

\begin{proof}Let $x\in\R^{-1}_{\{hc, \hbox{ } bg\}}$ and consider $z\in  b\R c$  such that 
$bg=zx$ and $hc=xz$. According to Theorem \ref{thm4}, $x$ is $(b, c)$-invertible. In addition,
$x^{-(b,\hbox{ }c)}=z$. Now let $m\in hc\R (1-bg) + (1-hc)\R$. According to Corollary \ref{cor5},
$x+m$ is  $(b, c)$-invertible and $(x+m)^{-(b,\hbox{ }c)}= x^{-(b,\hbox{ }c)}=z$.\par

\indent On the other hand, let $a\in\R^{-(b,\hbox{ }c)}$. According to Theorem \ref{thm4}, $hcabg=x\in  \R^{-1}_{\{hc, \hbox{ } bg\}}$.
Note that $a= hcabg +m$, where $m= hca(1-bg) +(1-hc)a\in hc\R (1-bg) + (1-hc)\R$.
\end{proof}

\indent  From Theorem \ref{thm6} a new characterization of $(b, c)$-invertible elements can be derived.\par

\begin{cor}\label{cor7}Let $\R$ be a ring with unity and consider $b$, $c\in\hat{\R}$. Let $a\in\R$ and consider 
$g\in b\{1\}$ and $h\in c\{1\}$. Then, the following statements are equivalent.\par
\noindent {\rm (i)} The element $a$ is $(b, c)$-invertible.\par
\noindent {\rm (ii)} There exist $u$, $v\in\R$ such that $u\in  \R^{-1}_{\{hc, \hbox{ } bg\}}$, $hcvbg=0$ and $a=u+v$.
\end{cor}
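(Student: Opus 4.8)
The plan is to deduce this corollary directly from Theorem~\ref{thm6}, which already provides the decomposition $\R^{-(b,\hbox{ }c)}= \R^{-1}_{\{hc,\hbox{ }bg\}}+ hc\R(1-bg)+(1-hc)\R$. The only extra observation needed is that the second summand $hc\R(1-bg)+(1-hc)\R$ coincides with the set $\{v\in\R\colon hcvbg=0\}$. So the proof splits into two implications together with this auxiliary set identity.

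First I would prove the easy direction (ii)$\Rightarrow$(i). Assume $a=u+v$ with $u\in\R^{-1}_{\{hc,\hbox{ }bg\}}$ and $hcvbg=0$. The condition $hcvbg=0$ places $v$ in $hc\R(1-bg)+(1-hc)\R$: indeed, write $v = hcv bg + hcv(1-bg) + (1-hc)v = hcv(1-bg)+(1-hc)v$, using $hc,bg\in\R^\bullet$ and $hcvbg=0$; the first term lies in $hc\R(1-bg)$ and the second in $(1-hc)\R$. Then Theorem~\ref{thm6} (or directly Corollary~\ref{cor5}, noting $u^{-(b,\hbox{ }c)}\in b\R c$) gives that $a=u+v$ is $(b,c)$-invertible with $a^{-(b,\hbox{ }c)}=u^{-(b,\hbox{ }c)}$.

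For the converse (i)$\Rightarrow$(ii), suppose $a$ is $(b,c)$-invertible. Apply Theorem~\ref{thm6}: write $a=u+m$ with $u\in\R^{-1}_{\{hc,\hbox{ }bg\}}$ and $m\in hc\R(1-bg)+(1-hc)\R$. Set $v:=m$; it remains to check $hcvbg=0$. For $v=hcr(1-bg)$ one has $hcvbg=hc\,hcr(1-bg)bg = hcr(bg-bg)=0$ since $hc,bg$ are idempotents; for $v=(1-hc)s$ one has $hcvbg=hc(1-hc)sbg=0$. Linearity then gives $hcvbg=0$ for every $v\in hc\R(1-bg)+(1-hc)\R$, establishing the converse (and incidentally the set identity mentioned above, whose reverse inclusion is exactly the computation used in the first paragraph).

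I do not expect a genuine obstacle here: the content is entirely carried by Theorem~\ref{thm6}, and the residual work is the elementary verification that $hc\R(1-bg)+(1-hc)\R=\{v\colon hcvbg=0\}$, which rests only on $hc$ and $bg$ being idempotents. The one point deserving a little care is making sure the decomposition $v = hcv(1-bg)+(1-hc)v$ is written correctly (so that the stated form of the second summand in Theorem~\ref{thm6} is matched on the nose), but this is routine.
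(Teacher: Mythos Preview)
Your proposal is correct and follows exactly the paper's approach: the paper's proof of this corollary consists solely of the instruction ``Apply Theorem~\ref{thm6}'', and what you have written is precisely the routine unpacking of that instruction, namely the verification that $hc\R(1-bg)+(1-hc)\R=\{v\in\R:hcvbg=0\}$. Your argument is sound and matches the intended route.
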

\begin{proof}Apply Theorem \ref{thm6}.
\end{proof}

\indent To end this section, the relationship between $\R^{-1}_{\{hc, \hbox{ } bg\}}$ and the product will be studied.
Recall however first that given a ring with unity $\R$ and $p\in\R^\bullet$,
$p\R p$ is a ring with unit $p$. Moreover, if $t\in (p\R p)^{-1}$, then its inverse in $p\R p$
will be denoted by $t^{-1}_{p\R p}$. In particular, $tt^{-1}_{p\R p}=t^{-1}_{p\R p}t=p$.\par

\begin{thm}\label{thm400}Let $\R$ be a ring with unity and consider $b$, $c\in\hat{\R}$. Let $g\in b\{1\}$ and $h\in c\{1\}$.
The following statements hold.\par
\noindent {\rm (i)} $\R^{-1}_{\{hc, \hbox{ } bg\}}=(hc\R hc)^{-1}\R^{-1}_{\{hc, \hbox{ } bg\}}(bg\R bg)^{-1}$.\par
\noindent {\rm (ii)} Let $x\in \R^{-1}_{\{hc, \hbox{ } bg\}}$ and consider $u\in (bg\R bg)^{-1}$ and $v\in (hc\R hc)^{-1}$.
Let $m\in hc\R (1-bg)+ (1-hc)\R$. Then, $vxu+m\in \R^{-(b,\hbox{ }c)}$ and 
$$
(vxu)^{-(b,\hbox{ }c)}=(vxu +m)^{-(b,\hbox{ }c)}= u^{-1}_{bg\R bg} x^{-(b,\hbox{ }c)} v^{-1}_{hc\R hc}.
$$
\end{thm}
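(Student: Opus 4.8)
The plan is to reduce everything to Theorem~\ref{thm4} and Theorem~\ref{thm6}. First I would verify statement (i). Given $x \in \R^{-1}_{\{hc,\ bg\}}$, there is $z \in b\R c$ with $bg = zx$ and $hc = xz$; note $x \in hc\R bg$ forces $hc\, x\, bg = x$, so $x$ lives in the corner and $z = bg\, z\, hc$ as well (since $z \in b\R c = bg\R hc$). Now take any $u \in (bg\R bg)^{-1}$ and $v \in (hc\R hc)^{-1}$, with corner-inverses $u^{-1}_{bg\R bg}$ and $v^{-1}_{hc\R hc}$. I would set $x' := vxu$ and check that $x' \in hc\R bg$ (clear, since $v \in hc\R hc$, $x \in hc\R bg$, $u \in bg\R bg$) and that $z' := u^{-1}_{bg\R bg}\, z\, v^{-1}_{hc\R hc} \in b\R c$ satisfies $z' x' = bg$ and $x' z' = hc$. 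For the first: $z' x' = u^{-1}_{bg\R bg} z v^{-1}_{hc\R hc} v x u = u^{-1}_{bg\R bg} z (hc) x u = u^{-1}_{bg\R bg} (zx) u = u^{-1}_{bg\R bg}(bg) u = u^{-1}_{bg\R bg} u = bg$, using $v^{-1}_{hc\R hc} v = hc$, $hc\, x = x$, and that $u \in bg\R bg$ so $bg\, u = u$. Symmetrically $x' z' = v x u u^{-1}_{bg\R bg} z v^{-1}_{hc\R hc} = v x (bg) z v^{-1}_{hc\R hc} = v (xz) v^{-1}_{hc\R hc} = v(hc) v^{-1}_{hc\R hc} = v v^{-1}_{hc\R hc} = hc$. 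Hence $vxu \in \R^{-1}_{\{hc,\ bg\}}$, giving the inclusion $\supseteq$ in (i); the inclusion $\subseteq$ is trivial, taking $u = bg$, $v = hc$.

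Next, for (ii), I would combine the computation just done with the two existing theorems. By Theorem~\ref{thm4} applied to $x' = vxu \in \R^{-1}_{\{hc,\ bg\}}$, the element $x'$ is $(b,c)$-invertible with $(vxu)^{-(b,\ c)} = z' = u^{-1}_{bg\R bg}\, x^{-(b,\ c)}\, v^{-1}_{hc\R hc}$ — here I am using that $x^{-(b,\ c)} = z$ from the first part. Then Theorem~\ref{thm6} (or equivalently Corollary~\ref{cor5}) gives that for any $m \in hc\R(1-bg) + (1-hc)\R$ the element $vxu + m$ is also $(b,c)$-invertible with the same $(b,c)$-inverse $(vxu+m)^{-(b,\ c)} = (vxu)^{-(b,\ c)}$, and this common inverse lies in $b\R c$. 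Stringing these equalities together yields the displayed formula.

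The main obstacle, such as it is, will be bookkeeping with the corner rings: one must be careful that $u^{-1}_{bg\R bg}$ is the inverse of $u$ \emph{inside} $bg\R bg$ (so $u u^{-1}_{bg\R bg} = u^{-1}_{bg\R bg} u = bg$, not $1$), and likewise for $v$, and that the "absorption" identities $bg\, u = u = u\, bg$, $hc\, v = v = v\, hc$ hold because $u,v$ belong to the respective corners. The key algebraic facts used repeatedly are $bg\R hc = b\R c$ (already exploited in Theorem~\ref{thm4}), $v^{-1}_{hc\R hc} v = hc$, $hc\, x\, bg = x$, and $z = bg\, z\, hc$. Once these are laid out cleanly, the verification of $z'x' = bg$ and $x'z' = hc$ is the short computation above, and the rest is a direct citation of Theorems~\ref{thm4} and~\ref{thm6}.

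\begin{proof}
\textbf{(i)} The inclusion $\supseteq$ is clear by taking $u=bg$ and $v=hc$, whose inverses in $bg\R bg$ and $hc\R hc$ are themselves. Conversely, let $x\in \R^{-1}_{\{hc,\ bg\}}$, so there is $z\in b\R c$ with $bg=zx$ and $hc=xz$; since $x\in hc\R bg$ we have $hc\,x\,bg=x$, and since $z\in b\R c=bg\R hc$ we have $bg\,z\,hc=z$. Fix $u\in(bg\R bg)^{-1}$ and $v\in(hc\R hc)^{-1}$, and put $x'=vxu\in hc\R bg$ and $z'=u^{-1}_{bg\R bg}\,z\,v^{-1}_{hc\R hc}\in b\R c$. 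Then, using $v^{-1}_{hc\R hc}v=hc$, $hc\,x=x$, $zx=bg$ and $bg\,u=u$,
$$
z'x'=u^{-1}_{bg\R bg}\,z\,v^{-1}_{hc\R hc}\,v\,x\,u=u^{-1}_{bg\R bg}\,z\,hc\,x\,u=u^{-1}_{bg\R bg}(zx)u=u^{-1}_{bg\R bg}\,bg\,u=u^{-1}_{bg\R bg}u=bg,
$$
and symmetrically, using $u\,u^{-1}_{bg\R bg}=bg$, $x\,bg=x$, $xz=hc$ and $hc\,v=v$,
$$
x'z'=v\,x\,u\,u^{-1}_{bg\R bg}\,z\,v^{-1}_{hc\R hc}=v\,x\,bg\,z\,v^{-1}_{hc\R hc}=v(xz)v^{-1}_{hc\R hc}=v\,hc\,v^{-1}_{hc\R hc}=v\,v^{-1}_{hc\R hc}=hc.
$$
Hence $x'=vxu\in \R^{-1}_{\{hc,\ bg\}}$, which proves $\subseteq$.

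\textbf{(ii)} Let $x\in \R^{-1}_{\{hc,\ bg\}}$ with associated $z\in b\R c$ as above; by Theorem \ref{thm4}, $x^{-(b,\ c)}=z$. With $u$, $v$, $x'=vxu$ and $z'$ as in the proof of (i), part (i) shows $x'\in \R^{-1}_{\{hc,\ bg\}}$ with the pair $z'$, so Theorem \ref{thm4} gives that $vxu$ is $(b,c)$-invertible and
$$
(vxu)^{-(b,\ c)}=z'=u^{-1}_{bg\R bg}\,x^{-(b,\ c)}\,v^{-1}_{hc\R hc}.
$$
Finally, for $m\in hc\R(1-bg)+(1-hc)\R$, Theorem \ref{thm6} yields that $vxu+m\in\R^{-(b,\ c)}$ and $(vxu+m)^{-(b,\ c)}=(vxu)^{-(b,\ c)}\in b\R c$. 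Combining the last two displays gives the asserted formula.
\end{proof}
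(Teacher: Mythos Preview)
Your proof is correct and follows essentially the same approach as the paper: both establish (i) by writing $x = hc\,x\,bg$ for the easy inclusion and then verifying directly that $z' = u^{-1}_{bg\R bg}\,z\,v^{-1}_{hc\R hc}$ witnesses $vxu \in \R^{-1}_{\{hc,\ bg\}}$ for the other, and both derive (ii) from Theorem~\ref{thm4} together with Corollary~\ref{cor5} (equivalently Theorem~\ref{thm6}). One small slip: in your formal proof the labels $\subseteq$ and $\supseteq$ are reversed relative to your (correct) plan --- taking $u=bg$, $v=hc$ shows LHS\,$\subseteq$\,RHS, while the computation $vxu \in \R^{-1}_{\{hc,\ bg\}}$ shows RHS\,$\subseteq$\,LHS --- but both inclusions are in fact proved and the mathematics is sound.
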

\begin{proof} Note that since $\R^{-1}_{\{hc, \hbox{ } bg\}}\subset hc\R bg$, 
$$
\R^{-1}_{\{hc, \hbox{ } bg\}}=hc \R^{-1}_{\{hc, \hbox{ } bg\}} bg\subseteq (hc\R hc)^{-1}\R^{-1}_{\{hc, \hbox{ } bg\}}(bg\R bg)^{-1}.
$$
\indent To prove the other inclusion, let $x\in \R^{-1}_{\{hc, \hbox{ } bg\}}$. Since $hcxbg=x$, according to Theorem \ref{thm4}, there exists $z\in b\R c=bg\R hc$ such that
$zx=bg$ and $xz=hc$. Consider $u\in (bg\R bg)^{-1}$ and $v\in (hc\R hc)^{-1}$ and set $x'=vxu$ and $z'=u^{-1}_{bg\R bg}x^{-(b,\hbox{ }c)} v^{-1}_{hc\R hc}$.
Note that $x'\in hc\R bg$ and $z'\in bg\R hc=b\R c$. Moreover, direct calculations proves that  $z'x'=bg$ and $x'z'=hc$. In particular,
the first statement holds.\par
\indent To conclude the proof, apply Corollary \ref{cor5}.
\end{proof}
\section{The Bott-Duffin inverse}

\noindent In this section the relationship between the $(b, c)$-inverse and the Boot-Duffin $(p, q)$-inverse will be studied.
Recall that the Bott-Duffin inverse is a particular case of the $(b, c)$-inverse. The next result will 
show that the these definitions are essentially the same. In fact, any $(b, c)$-inverse can be presented as a
Bott-Duffin inverse.

\begin{pro}\label{pro16} Let $\R$ be a ring with unity and consider $b$, $c\in\hat{\R}$. Let $a\in\R$ and consider 
$g\in b\{1\}$ and $h\in c\{1\}$. The following statements are equivalent.\par
\noindent {\rm (i)} The element $a$ is $(b, c)$-invertible.\par
\noindent {\rm (ii)} The element $a$ is Bott-Duffin $(bg, hc)$-invertible.\par
\noindent Furthermore, in this case 
$$
a^{-(b,\hbox{ }c)}=a^{-(bg,\hbox{ }hc)} \hbox{  and    }\R^{-(b,\hbox{ }c)}=\R^{-(bg,\hbox{ }hc)}.
$$
\end{pro}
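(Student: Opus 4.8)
The plan is to reduce Proposition \ref{pro16} to the characterization of $(b,c)$-invertibility already obtained in Theorem \ref{thm4} and the observations collected in Remark \ref{rema3}. The key point is that $bg$ and $hc$ are idempotents (since $g\in b\{1\}$ and $h\in c\{1\}$, one has $(bg)^2=bgbg=bg$ and $(hc)^2=hchc=hc$), that $bg\R=b\R$ and $\R hc=\R c$, and that $b\R c=bg\R hc$. By Remark \ref{rema3} (i), $(b,c)$-invertibility of $a$ is equivalent to $(bg,hc)$-invertibility of $a$ and the two inverses, when they exist, coincide; since $bg$ and $hc$ are idempotents, the $(bg,hc)$-inverse is by definition the Bott-Duffin $(bg,hc)$-inverse. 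This already gives the equivalence (i)$\Leftrightarrow$(ii), the identity $a^{-(b,\hbox{ }c)}=a^{-(bg,\hbox{ }hc)}$, and, applied elementwise, the set identity $\R^{-(b,\hbox{ }c)}=\R^{-(bg,\hbox{ }hc)}$.

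If one prefers a self-contained argument that does not route through \cite[Proposition 6.1]{D} via Remark \ref{rema3}, I would instead verify Definition \ref{def15} directly. First I would check that an element $y$ satisfying $y=bgy=yhc$ automatically satisfies $y\in bg\R y\cap y\R hc$ (take the witnesses $bg$ and $hc$ themselves, using that these are idempotents), so that the condition $y=py=yq$ in Definition \ref{def15} is the specialization of Definition \ref{def1} (i). Then, assuming $a^{-(b,\hbox{ }c)}$ exists, Remark \ref{rema3} (iv) gives $a^{-(b,\hbox{ }c)}=bg\,a^{-(b,\hbox{ }c)}=a^{-(b,\hbox{ }c)}hc$ (this is exactly why the normalized inner inverses $g'=gbg$, $h'=hch$ were introduced, but for the idempotents $bg$, $hc$ one can use the simpler forms), and from $b=a^{-(b,\hbox{ }c)}ab$, $c=caa^{-(b,\hbox{ }c)}$ one multiplies by $g$ on the right (resp. $h$ on the left) to obtain $a^{-(b,\hbox{ }c)}a\,bg=bg$ and $hc\,a\,a^{-(b,\hbox{ }c)}=hc$; hence $y=a^{-(b,\hbox{ }c)}$ fulfills Definition \ref{def15} for $p=bg$, $q=hc$. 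Conversely, if $y$ is the Bott-Duffin $(bg,hc)$-inverse, then $y=bgy\in b\R$ and $y=yhc\in\R c$, and $b=bgb=(yabg)b=ya(bgb)=yab$ wait— more carefully, from $yabg=bg$ right-multiply by $b$ and use $bgb=b$ to get $yab=b$; symmetrically $hcay=hc$ gives, left-multiplying by $c$ and using $chc=c$, $cay=c$; together with $y=yay$ and $y\in b\R c\subseteq b\R y\cap y\R c$ this is Definition \ref{def1}. So $a$ is $(b,c)$-invertible with $a^{-(b,\hbox{ }c)}=y$.

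For the set identity I would simply apply the equivalence (i)$\Leftrightarrow$(ii) and the coincidence of the two inverses to every $x\in\R$: $x\in\R^{-(b,\hbox{ }c)}$ iff $x$ is $(b,c)$-invertible iff $x$ is Bott-Duffin $(bg,hc)$-invertible iff $x\in\R^{-(bg,\hbox{ }hc)}$, and in that case $x^{-(b,\hbox{ }c)}=x^{-(bg,\hbox{ }hc)}$. I do not anticipate a genuine obstacle here; the only thing to be careful about is the bookkeeping of which side each inner inverse acts on (multiplying $b=\cdots b$ by $g$ on the \emph{right} and $c=c\cdots$ by $h$ on the \emph{left}), and the implicit verification that the conclusion is independent of the choice of $g\in b\{1\}$ and $h\in c\{1\}$ — this last point follows because by Remark \ref{rema3} (i) the $(bg,hc)$-inverse depends only on $bg\R=b\R$ and $\R hc=\R c$, which are independent of the choice of $g$ and $h$.
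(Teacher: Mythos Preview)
Your proposal is correct and matches the paper's approach: the paper's proof is the single line ``Apply Theorem \ref{thm4}'', which (applied once to $(b,c)$ with inner inverses $g,h$ and once to the idempotents $(bg,hc)$ using them as their own inner inverses, so that $b\R c=bg\R hc$ and the equations $bg=z\,hcabg$, $hc=hcabg\,z$ are literally unchanged) yields identical conditions---the same fact you extract via Remark \ref{rema3}(i), namely that $(b,c)$-invertibility depends only on $b\R$ and $\R c$. One minor slip in your alternative direct verification: the inclusion $b\R c\subseteq b\R y\cap y\R c$ is not valid in general, but you do not need it---from $y=bgy$ and $y=yhc$ you already have $y\in b\R y\cap y\R c$ directly.
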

\begin{proof} Apply Theorem \ref{thm4}.
\end{proof}

\indent Next the relationship between the Bott-Duffin inverse and the usual inverse will be studied in a 
particular case.
To this end, recall first the following well known result. Let $\R$ be a ring with unity and
consider $p\in\R^\bullet$ and $a\in \R$ such that $ap=pa$ (equivalently $a=pap +(1-p)a(1-p)$).
Then, necessary and sufficient for $a\in\R^{-1}$ is that  $pap\in (p\R p)^{-1}$ and $(1-p)a(1-p)\in ((1-p)\R (1-p))^{-1}$. 
Moreover, in this case, a direct calculation proves that 
if $a_1\in p\R p$ (respectively $a_2\in (1-p)\R(1-p)$)
is the inverse of $pap$ in $p\R p$ (respectively the inverse of  $(1-p)a(1-p)$ in $(1-p)\R (1-p)$), then $a^{-1}=a_1+a_2$.
In the following lemma,
this result will be extended to the case of two idempotents.\par

\begin{lem} \label{lem18} Let $\R$ be a ring with unity and consider $p$, $q\in \R^\bullet$.
Supose that $a\in\R$ is such that $ap=qa$. Then the following statements are equivalent.\par
\noindent {\rm (i)} The element $a$ is invertible.\par
\noindent {\rm (ii)} There is $z\in \R$ such that $zq=pz$ and 
\begin{align*}
&pzqap=p, \,\,\,\, \, \, \,    (1-p)z(1-q)a(1-p)=1-p, \\
&qapzq=q,  \,\,\,\, \, \, \,   (1-q)a(1-p)z(1-q)=1-q.\\
\end{align*}
\noindent Furthermore, in this case $z=a^{-1}$.
\end{lem}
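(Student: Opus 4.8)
The plan is to exploit the standing hypothesis $ap=qa$, which ``block-diagonalizes'' left multiplication by $a$ with respect to the Peirce-type decompositions $\R=p\R\oplus(1-p)\R$ (in the domain) and $\R=q\R\oplus(1-q)\R$ (in the codomain). From $ap=qa$ one gets at once $a(1-p)=(1-q)a$, $qap=ap=qa$, $(1-q)ap=0$, $qa(1-p)=0$ and $(1-q)a(1-p)=a(1-p)$, and these identities, together with $p^2=p$ and $q^2=q$, are all that is needed. I also record that for $z\in\R$ the relation $zq=pz$ is equivalent to $z(1-q)=(1-p)z$.

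For the implication (i) $\Rightarrow$ (ii) I would take $z=a^{-1}$. Left multiplying $ap=qa$ by $z$ gives $zqa=p$, right multiplying it by $z$ gives $apz=q$, and then multiplying $zqa=p$ on the right by $z$ yields $zq=pz$; moreover $z(1-q)a=za-zqa=1-p$ and $a(1-p)z=az-apz=1-q$. Each of the four displayed equations of (ii) now follows by inserting these identities and collapsing products of idempotents, e.g.\ $pzqap=p(zqa)p=p$ and $(1-q)a(1-p)z(1-q)=(1-q)(1-q)(1-q)=1-q$.

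For the converse (ii) $\Rightarrow$ (i) the goal is to prove $za=1$ and $az=1$, which forces $a\in\R^{-1}$ with $a^{-1}=z$. Using $qap=ap$ together with $zq=pz$ (hence $z(1-q)=(1-p)z$ and $(1-q)a(1-p)=a(1-p)$), the four equations of (ii) simplify respectively to $pzap=p$, $(1-p)za(1-p)=1-p$, $apz=q$ and $a(1-p)z=1-q$. To obtain $za=1$ I would write $za=zap+za(1-p)$ and handle each summand: $zap=pzap+(1-p)zap$, where $pzap=p$ by the first simplified equation and $(1-p)zap=z(1-q)ap=0$ because $(1-q)ap=0$, so $zap=p$; similarly $za(1-p)=pza(1-p)+(1-p)za(1-p)$, where $pza(1-p)=zqa(1-p)=0$ and $(1-p)za(1-p)=1-p$ by the second simplified equation, so $za(1-p)=1-p$. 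The identity $az=1$ is obtained symmetrically by splitting $az=azq+az(1-q)$ and using $azq=apz=q$ and $az(1-q)=a(1-p)z=1-q$.

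The whole argument is elementary; the only point to watch is the bookkeeping, namely pairing each of the four equations of (ii) with the correct ``corner'' of $za$ or of $az$, and correctly using the two vanishing identities $(1-q)ap=0$, $qa(1-p)=0$ and the two forms $zq=pz$, $z(1-q)=(1-p)z$ of the linking relation. I do not expect any conceptual difficulty beyond this: the lemma is precisely the two-idempotent analogue of the block-diagonal invertibility criterion recalled immediately before its statement.
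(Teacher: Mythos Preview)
Your argument is correct and follows precisely the approach indicated in the paper: the paper observes that $ap=qa$ is equivalent to $a=qap+(1-q)a(1-p)$ (equivalently $(1-q)ap=0=qa(1-p)$), makes the analogous remark for $zq=pz$, and then declares the rest ``direct computation'' left to the reader. You have carried out exactly that computation, with the correct bookkeeping in both directions, so there is nothing to add.
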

\begin{proof}Note that the condition $ap=qa$ is equivalent to  $a= qap +(1-q)a(1-p)$,
which in turn is equivalent to $(1-q)ap=0=qa(1-p)$. Similarly, given $z\in\R$, necessary and
sufficient for $zq=pz$ is that $z=pzq+(1-p)z(1-q)$,  equivalently, $(1-p)zq=0=pz(1-q)$.\par

\indent The statement of the Lemma can be proved by direct computation. The details are left to the reader.
\end{proof}

\indent Next a relation between the Bott-Duffin inverse and the usual inverse will be established.\par

\begin{thm}\label{thm19}Let $\R$ be a ring with unity and consider $p$, $q\in\R^\bullet$. 
Let $a\in\R$ be such that $ap=qa$. Then, the following statements are equivalent.\par
\noindent {\rm (i)} The element $a\in \R^{-1}$.\par
\noindent {\rm (ii)} Both the Bott-Duffin $(p, q)$-inverse of $a$ and the Bott-Duffin $(1-p, 1-q)$-inverse of $a$ exist.\par
\noindent Furthermore, in this case $a^{-1}= a^{-(p,\hbox{ } q)} + a^{-(1-p,\hbox{ } 1-q)}$.
\end{thm}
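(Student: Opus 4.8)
The plan is to reduce the statement to Lemma \ref{lem18}, which, under the standing hypothesis $ap = qa$, already characterizes invertibility of $a$ through an auxiliary element $z$ satisfying $zq = pz$ together with four ``corner'' identities. The bridge will be the observation that those four identities are, after a suitable regrouping, nothing but the defining equations of the Bott-Duffin $(p,q)$-inverse and the Bott-Duffin $(1-p,1-q)$-inverse of $a$, and that the splitting $z = pzq + (1-p)z(1-q)$ exhibited in the proof of Lemma \ref{lem18} matches $z$ with the sum of these two inverses.

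For the implication (ii) $\Rightarrow$ (i), I would put $y_1 = a^{-(p,\hbox{ } q)}$, $y_2 = a^{-(1-p,\hbox{ } 1-q)}$ and $z = y_1 + y_2$. Using $y_1 = py_1 = y_1 q$ and $y_2 = (1-p)y_2 = y_2(1-q)$, I would first record $pz = zq = y_1$ and $(1-p)z = z(1-q) = y_2$ (so in particular $py_2 = y_2 q = 0$ and $(1-p)y_1 = y_1(1-q) = 0$). Granting these, each of the four equations in Lemma \ref{lem18}\,(ii) collapses to one of the Bott-Duffin equations: e.g. $pzqap$, grouped as $(pzq)\,a\,p$, reduces via $pzq = y_1$ to $y_1 a p = p$; likewise $qapzq = qa\,(pzq) = qay_1 = q$, and the remaining two reduce to $y_2 a(1-p) = 1-p$ and $(1-q)ay_2 = 1-q$. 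Lemma \ref{lem18} then yields $a \in \R^{-1}$ and $a^{-1} = z = a^{-(p,\hbox{ } q)} + a^{-(1-p,\hbox{ } 1-q)}$, which is also the claimed formula.

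For (i) $\Rightarrow$ (ii), I would apply Lemma \ref{lem18} in the other direction to obtain $z = a^{-1}$ with $zq = pz$ and the four corner identities, and then set $y_1 = pzq = pa^{-1}q$ and $y_2 = (1-p)z(1-q) = (1-p)a^{-1}(1-q)$, so that $z = y_1 + y_2$ by the reformulation of $zq = pz$ used in the proof of Lemma \ref{lem18}. Idempotency of $p$ and $q$ gives $y_1 = py_1 = y_1 q$ and $y_2 = (1-p)y_2 = y_2(1-q)$ at once, and substituting $pzq$ and $(1-p)z(1-q)$ into the four corner identities of Lemma \ref{lem18}\,(ii) yields exactly $y_1 a p = p$, $qay_1 = q$, $y_2 a(1-p) = 1-p$ and $(1-q)ay_2 = 1-q$. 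Hence $a^{-(p,\hbox{ } q)}$ and $a^{-(1-p,\hbox{ } 1-q)}$ both exist and their sum is $a^{-1}$.

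I do not expect a real obstacle once Lemma \ref{lem18} is in hand: the argument is entirely bookkeeping with the four idempotent corners determined by $p$ and $q$. The only step deserving a little care is the very first one in each direction --- checking that $pz = zq$ and $(1-p)z = z(1-q)$ hold for the relevant $z$ --- since it is precisely this that licenses replacing a block $pzq$ (respectively $(1-p)z(1-q)$) by $y_1$ (respectively $y_2$) inside the corner identities. Everything past that point is a direct computation, which I would leave to the reader in the same spirit as the proof of Lemma \ref{lem18}.
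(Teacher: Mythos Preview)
Your proposal is correct and follows essentially the same route as the paper: both directions are reduced to Lemma~\ref{lem18}, matching the corner pieces $pzq$ and $(1-p)z(1-q)$ with the two Bott--Duffin inverses. The only cosmetic difference is that the paper invokes Theorem~\ref{thm4} to identify the corner equations with Bott--Duffin invertibility, whereas you verify Definition~\ref{def15} directly; the content is the same.
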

\begin{proof} Since $ap=qa$, $a=a_1+ a_2$, where $a_1= qap\in q\R p$ and $a_2=(1-q)a(1-p)\in (1-q)\R (1-p)$. 
According to Lemma \ref{lem18}, necessary and sufficient for $a\in\R ^{-1}$ is that there exists $z=z_1+z_2$,
$z_1= pzq\in p\R q$ and $z_2=(1-p)z(1-q)\in (1-p)\R (1-q)$, such that
\begin{align*}
&z_1a_1=p, \,\,\,\, \, \, \,\,\,\,\,\,\,\,\,\,\, a_1z_1=q,\\
&z_2a_2=1-p, \,\,\,\, \, \, \,  a_2z_2=1-q.\\
\end{align*}
Moreover, in this case $z=a^{-1}$. However, acording to Theorem \ref{thm4}, these equations are equivalent
to the fact that $a^{-(p,\hbox{ } q)}$ and $a^{-(1-p,\hbox{ } 1-q)}$ exist. \par

\indent To prove the converse, let $z= a^{-(p,\hbox{ } q)} + a^{-(1-p,\hbox{ } 1-q)}$. Clearly, $zq=pz$.
In addition, since $pzq=a^{-(p,\hbox{ } q)}$ and $(1-p)z(1-q)= a^{-(1-p,\hbox{ } 1-q)}$, according to Lemma \ref{lem18}, $a$ in invertible.
\end{proof}

\indent In the following corollary the relationship between the $(b, c)$-inverse and the usual inverse will be studied.\par

\begin{cor}\label{cor20}Let $\R$ be a ring with unity and consider $b$, $c\in \hat{\R}$.
Let $g\in b\{1\}$ and $h\in c\{1\}$. Then, if $a\in\R$ is such that $abg=hca$, the following statements are equivalent.\par
\noindent {\rm (i)} The element $a\in \R^{-1}$.\par
\noindent {\rm (ii)} $a$ is $(b, c)$-invertible and the Bott-Duffin $(1-bg, 1-hc)$-inverse of $a$ exists.\par
\noindent Furthermore, in this case $a^{-1}= a^{-(b,\hbox{ } c)} + a^{-(1-bg,\hbox{ } 1-hc)}$.
\end{cor}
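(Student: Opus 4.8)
The plan is to reduce Corollary \ref{cor20} to Theorem \ref{thm19} by taking $p=bg$ and $q=hc$. Since $g\in b\{1\}$ and $h\in c\{1\}$, the elements $bg$ and $hc$ are idempotents, so $p,q\in\R^\bullet$ and the hypothesis $abg=hca$ reads exactly as $ap=qa$. Thus the machinery of Theorem \ref{thm19} applies verbatim once we identify the Bott-Duffin $(bg,hc)$-inverse of $a$ with the $(b,c)$-inverse of $a$.

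First I would invoke Proposition \ref{pro16}: for $b,c\in\hat{\R}$ with the fixed choices $g\in b\{1\}$, $h\in c\{1\}$, the element $a$ is $(b,c)$-invertible if and only if $a$ is Bott-Duffin $(bg,hc)$-invertible, and in that case $a^{-(b,\hbox{ }c)}=a^{-(bg,\hbox{ }hc)}$. Combining this with Theorem \ref{thm19} applied to $p=bg$, $q=hc$ gives: $a\in\R^{-1}$ if and only if both the Bott-Duffin $(bg,hc)$-inverse and the Bott-Duffin $(1-bg,1-hc)$-inverse of $a$ exist, which by Proposition \ref{pro16} is equivalent to: $a$ is $(b,c)$-invertible and the Bott-Duffin $(1-bg,1-hc)$-inverse of $a$ exists. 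This is precisely the equivalence (i)$\Leftrightarrow$(ii). For the formula, Theorem \ref{thm19} yields $a^{-1}=a^{-(bg,\hbox{ }hc)}+a^{-(1-bg,\hbox{ }1-hc)}$, and replacing $a^{-(bg,\hbox{ }hc)}$ by $a^{-(b,\hbox{ }c)}$ via Proposition \ref{pro16} gives $a^{-1}=a^{-(b,\hbox{ }c)}+a^{-(1-bg,\hbox{ }1-hc)}$, as claimed.

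The only point requiring a little care — and the place I would expect a referee to look — is checking that Theorem \ref{thm19} is genuinely applicable, i.e.\ that $1-bg$ and $1-hc$ are idempotents (immediate, since $bg$ and $hc$ are) and that nothing in the statement of Theorem \ref{thm19} secretly uses that $p,q$ arise from $b,c$ in the $(b,c)$-setting; they do not, $p$ and $q$ are arbitrary idempotents there. There is no genuine obstacle: the corollary is essentially a translation of Theorem \ref{thm19} through the dictionary furnished by Proposition \ref{pro16}, so the proof is short. I would simply write: ``Since $bg$, $hc\in\R^\bullet$ and $abg=hca$, apply Theorem \ref{thm19} with $p=bg$ and $q=hc$, together with Proposition \ref{pro16}.''
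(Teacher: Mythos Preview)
Your proposal is correct and follows exactly the paper's own proof: set $p=bg$, $q=hc$ and apply Theorem \ref{thm19} together with Proposition \ref{pro16}. The extra checks you flag (that $bg,hc\in\R^\bullet$ and that Theorem \ref{thm19} imposes no hidden conditions on $p,q$) are routine and the paper does not even spell them out.
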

\begin {proof} Define $p=bg$ and $q=hc$. Apply then Theorem \ref{thm19} and Proposition \ref{pro16}.
\end{proof}
\section{The reverse order law}

\noindent The main objective of this section is to study the reverse order law. In first place the notion of reverse order law for the $(b,c)$-inverse will be 
introduced.\par

\begin{df}\label{def8} Let $\R$ be a ring with unity and consider $b_i$, $c_i\in\hat{\R}$ ($i=1$, $2$). 
Let $g_i\in b_i\{1\}$ and $h_i\in c_i\{1\}$ ($i=1$, $2$) and suppose that $h_2c_2=b_1g_1$. 
Let $a_i\in \R$ and suppose that $a^{-(b_i, \hbox{ }c_i)}$ exist  ($i=1$, $2$). It will be said that
$a_1a_2$ satisfies the reverse order law, if  $a_1a_2$ is $(b_2,c_1)$-invertible and 
$$
(a_1a_2)^{-(b_2,\hbox{ } c_1)}=a_2^{-(b_2,\hbox{ } c_2)}a_1^{-(b_1,\hbox{ } c_1)}.
$$
\end{df}

\indent Next the reverse order law will be studied.

\begin{thm}\label{thm9}Let $\R$ be a ring with unity and consider $b_i$, $c_i\in\hat{\R}$ ($i=1$, $2$). 
Let $g_i\in b_i\{1\}$ and $h_i\in c_i\{1\}$ ($i=1$, $2$) and suppose that $h_2c_2=b_1g_1$. 
Let $a_i\in \R$ and suppose that $a^{-(b_i, \hbox{ }c_i)}$ exist  ($i=1$, $2$). Then, the following statements are equivalent.\par
\noindent $\mathrm{(i)}$ $h_1c_1a_1(1-b_1g_1)a_2b_2g_2=0$.\par
\noindent $\mathrm{(ii)}$ The element $a_1a_2$ satisfies the reverse order law.
\end{thm}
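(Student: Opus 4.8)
The plan is to test the explicit candidate inverse against the characterisation in Theorem~\ref{thm4}. Write $y_i:=a_i^{-(b_i,\hbox{ }c_i)}$ ($i=1,2$), set $z:=y_2y_1$, and put $e:=h_2c_2=b_1g_1$, which is an idempotent. By Theorem~\ref{thm4} and Remark~\ref{rema3}, $y_1\in b_1\R c_1=b_1g_1\R h_1c_1$ and $y_2\in b_2\R c_2=b_2g_2\R h_2c_2=b_2g_2\R e$, whence the absorption identities
$$
ey_1=y_1=y_1h_1c_1, \qquad b_2g_2y_2=y_2=y_2e, \qquad z=y_2y_1\in b_2\R c_1 .
$$
Multiplying the equations $b_i=y_ia_ib_i$ and $c_i=c_ia_iy_i$ of Definition~\ref{def1} on the appropriate side by $g_i$ or $h_i$ and combining with the absorption identities, I would also record
\begin{align*}
e&=y_1a_1e, & h_1c_1&=h_1c_1a_1y_1=h_1c_1a_1ey_1,\\
b_2g_2&=y_2a_2b_2g_2=y_2ea_2b_2g_2, & e&=ea_2y_2=ea_2b_2g_2y_2.
\end{align*}
Since $z\in b_2\R c_1$, Theorem~\ref{thm4} (applied to the triple $(a_1a_2,b_2,c_1)$ with the inner inverses $g_2\in b_2\{1\}$ and $h_1\in c_1\{1\}$) says that the reverse order law holds, i.e.\ $a_1a_2$ is $(b_2,c_1)$-invertible with $(a_1a_2)^{-(b_2,\hbox{ }c_1)}=z$, if and only if
$$
b_2g_2=z\, h_1c_1a_1a_2b_2g_2 \qquad\hbox{and}\qquad h_1c_1=h_1c_1a_1a_2b_2g_2\, z .
$$

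For (i)$\Rightarrow$(ii): condition (i) reads exactly $h_1c_1a_1a_2b_2g_2=h_1c_1a_1\,e\,a_2b_2g_2$, and substituting this into the two displayed equations collapses them via the recorded identities. In the first, $z\,h_1c_1a_1ea_2b_2g_2=y_2y_1a_1ea_2b_2g_2=y_2(y_1a_1e)a_2b_2g_2=y_2ea_2b_2g_2=b_2g_2$ (using $y_1h_1c_1=y_1$, then $e=y_1a_1e$, then $b_2g_2=y_2ea_2b_2g_2$). In the second, $h_1c_1a_1ea_2b_2g_2\,z=h_1c_1a_1e(a_2b_2g_2y_2)y_1=h_1c_1a_1ey_1=h_1c_1$ (using $e=ea_2b_2g_2y_2$, then $h_1c_1=h_1c_1a_1ey_1$). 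Hence both conditions hold and the reverse order law follows from Theorem~\ref{thm4}.

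For (ii)$\Rightarrow$(i): if the reverse order law holds then $z=y_2y_1\in b_2\R c_1$ is the $(b_2,c_1)$-inverse of $a_1a_2$, so by Theorem~\ref{thm4} the first condition $b_2g_2=z\,h_1c_1a_1a_2b_2g_2$ holds; using $y_1h_1c_1=y_1$ it becomes $b_2g_2=y_2y_1a_1a_2b_2g_2$. Left-multiplying by $ea_2$ and using $ea_2y_2=e$ and $ey_1=y_1$ gives $ea_2b_2g_2=y_1a_1a_2b_2g_2$; left-multiplying this by $h_1c_1a_1$ and using $h_1c_1a_1y_1=h_1c_1$ gives $h_1c_1a_1ea_2b_2g_2=h_1c_1a_1a_2b_2g_2$, that is, $h_1c_1a_1(1-b_1g_1)a_2b_2g_2=0$, which is (i) (recall $e=b_1g_1$).

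The only delicate point is organisational: at each step one must pick the single absorption identity that collapses the factor in question, and the compatibility hypothesis $h_2c_2=b_1g_1$ enters precisely at the junction where the idempotent $e$ is transferred between the $(b_1,c_1)$-side (through $e=y_1a_1e$, $ey_1=y_1$, $h_1c_1a_1ey_1=h_1c_1$) and the $(b_2,c_2)$-side (through $b_2g_2=y_2ea_2b_2g_2$, $ea_2y_2=e$, $ea_2b_2g_2y_2=e$). Beyond this bookkeeping there is no genuine obstacle: all the computations are one- or two-step reductions once Theorem~\ref{thm4} has been invoked.
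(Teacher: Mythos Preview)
Your proof is correct and follows essentially the same route as the paper: both directions are handled by testing the candidate $z=y_2y_1$ against the criterion of Theorem~\ref{thm4}, and for (ii)$\Rightarrow$(i) the paper likewise left-multiplies successively by (variants of) $h_2c_2a_2$ and $h_1c_1a_1$ to strip off $y_2$ and then $y_1$. Your upfront cataloguing of the absorption identities and the abbreviation $e=h_2c_2=b_1g_1$ streamline the bookkeeping slightly, but the argument is the same.
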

\begin{proof}Let $a_i^{-(b_i, \hbox{ }c_i)}\in b_i \R c_i$ ($i=1$, $2$). Recall that according to Theorem \ref{thm4}, for $i=1$, $2$,
$$
b_ig_i=a_i^{-(b_i, \hbox{ }c_i)}h_ic_ia_ib_ig_i,\,\,\, \,\,\, h_ic_i=h_ic_ia_ib_ig_i a_i^{-(b_i, \hbox{ }c_i)}.
$$
\noindent Since $h_2c_2=b_1g_1$, it is not difficult to prove that
$$
h_1c_1a_1a_2b_2g_2=h_1c_1a_1b_1g_1a_2b_2g_2 + h_1c_1a_1(1-b_1g_1)a_2b_2g_2.
$$

\indent Suppose that $h_1c_1a_1(1-b_1g_1)a_2b_2g_2=0$.
Thus, $a_2^{-(b_2, \hbox{ }c_2)}a_1^{-(b_1, \hbox{ }c_1)}\in b_2\R c_1$ and, according to Theorem \ref{thm4},
\begin{align*}
a_2^{-(b_2, \hbox{ }c_2)}a_1^{-(b_1, \hbox{ }c_1)}h_1c_1a_1a_2b_2g_2&=a_2^{-(b_2, \hbox{ }c_2)}a_1^{-(b_1, \hbox{ }c_1)}h_1c_1a_1b_1g_1h_2c_2a_2b_2g_2\\
&=a_2^{-(b_2, \hbox{ }c_2)}b_1g_1h_2c_2a_2b_2g_2=a_2^{-(b_2, \hbox{ }c_2)}h_2c_2a_2b_2g_2\\
&=b_2g_2,
\end{align*}
and
\begin{align*}
h_1c_1a_1a_2b_2g_2a_2^{-(b_2, \hbox{ }c_2)}a_1^{-(b_1, \hbox{ }c_1)}&=h_1c_1a_1b_1g_1h_2c_2a_2b_2g_2a_2^{-(b_2, \hbox{ }c_2)}a_1^{-(b_1, \hbox{ }c_1)}\\
&=h_1c_1a_1b_1g_1h_2c_2a_1^{-(b_1, \hbox{ }c_1)}=h_1c_1a_1b_1g_1a_1^{-(b_1, \hbox{ }c_1)}\\
&=h_1c_1.
\end{align*}

Therefore, according to Theorem \ref{thm4}, $a_1a_2$ is $(b_2, c_1)$-invertible and $a_2^{-(b_2,\hbox{ } c_2)}a_1^{-(b_1,\hbox{ } c_1)}= (a_1a_2)^{-(b_2,\hbox{ } c_1)}$.\par

\indent To prove the converse, suppose that $a_1a_2$ is $(b_2, c_1)$-invertible and $a_2^{-(b_2,\hbox{ } c_2)}a_1^{-(b_1,\hbox{ } c_1)}= (a_1a_2)^{-(b_2,\hbox{ } c_1)}$.
In particular, according to Theorem \ref{thm4},
\begin{align*}
b_2g_2&=a_2^{-(b_2, \hbox{ }c_2)}a_1^{-(b_1, \hbox{ }c_1)}h_1c_1a_1a_2b_2g_2\\
              &=a_2^{-(b_2, \hbox{ }c_2)}a_1^{-(b_1, \hbox{ }c_1)}h_1c_1a_1b_1g_1a_2b_2g_2  +a_2^{-(b_2, \hbox{ }c_2)}a_1^{-(b_1, \hbox{ }c_1)}h_1c_1a_1(1-b_1g_1)a_2b_2g_2\\
              &=b_2g_2 +a_2^{-(b_2, \hbox{ }c_2)}a_1^{-(b_1, \hbox{ }c_1)}h_1c_1a_1(1-b_1g_1)a_2b_2g_2.\\
\end{align*}
As a result, 
$$
a_2^{-(b_2, \hbox{ }c_2)}a_1^{-(b_1, \hbox{ }c_1)}h_1c_1a_1(1-b_1g_1)a_2b_2g_2=0.
$$
Now, multiplying the above equation on the left side by $h_2c_2a_2b_2g_2$,  
$$
0=h_2c_2a_1^{-(b_1, \hbox{ }c_1)}h_1c_1a_1(1-b_1g_1)a_2b_2g_2=a_1^{-(b_1, \hbox{ }c_1)}h_1c_1a_1(1-b_1g_1)a_2b_2g_2.
$$
Finally, multiplying again the above equation on the left side by $h_1c_1a_1b_1g_1$,
$$
h_1c_1a_1(1-b_1g_1)a_2b_2g_2=0.
$$
\end{proof}

\indent Since the Bott-Duffin inverse is a particular case of the $(b ,c)$-inverse, the following corollary can be directly derived from
Theorem \ref{thm9}.\par

\begin{cor}\label{cor17}Let $\R$ be a ring with unity and consider $p$, $q$, $r\in\R^\bullet$.  
Let $a_i\in \R$, $i=1$, $2$, and suppose that $a_1$ is Bott-Duffin $(p, q)$-invertible and
$a_2$ is Bott-Duffin $(r, p)$-Bott Duffin invertible. 
Then, the following statements are equivalent.\par
\noindent $\mathrm{(i)}$ $qa_1(1-p)a_2r=0$.\par
\noindent $\mathrm{(ii)}$ The element $a_1a_2$ is Bott-Duffin $(r, q)$-invertible and
$$
(a_1a_2)^{-(r,\hbox{ } q)}=a_2^{-(r,\hbox{ } p)}a_1^{-(p,\hbox{ } q)}.
$$

\end{cor}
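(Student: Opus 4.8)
The plan is to obtain Corollary \ref{cor17} as a direct specialization of Theorem \ref{thm9} to the case in which all of $b_1,c_1,b_2,c_2$ are idempotents. Two facts make this work: first, given $p,q\in\R^\bullet$ the Bott-Duffin $(p,q)$-inverse is precisely the $(b,c)$-inverse with $b=p$ and $c=q$ (as observed just after Definition \ref{def15}), so these inverses are unique whenever they exist; second, an idempotent $p$ is its own inner inverse, since $p=ppp$, hence $p\in p\{1\}$. No commutation hypothesis is needed, in contrast with Corollary \ref{cor20}.

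Concretely, I would set $b_1=p$, $c_1=q$, $b_2=r$, $c_2=p$ and choose the inner inverses $g_1=p\in b_1\{1\}$, $h_1=q\in c_1\{1\}$, $g_2=r\in b_2\{1\}$, $h_2=p\in c_2\{1\}$. Then $b_1g_1=p^2=p$ and $h_2c_2=p^2=p$, so the compatibility assumption $h_2c_2=b_1g_1$ of Theorem \ref{thm9} holds. Moreover the standing hypotheses translate verbatim: $a_1$ being Bott-Duffin $(p,q)$-invertible means $a_1^{-(b_1,\hbox{ }c_1)}$ exists and equals $a_1^{-(p,\hbox{ }q)}$, and likewise $a_2^{-(b_2,\hbox{ }c_2)}=a_2^{-(r,\hbox{ }p)}$ exists. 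Thus Theorem \ref{thm9} applies with these data.

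It then remains to rewrite its two equivalent statements under the above substitutions. Using $q^2=q$ and $r^2=r$, condition (i) of Theorem \ref{thm9}, namely $h_1c_1a_1(1-b_1g_1)a_2b_2g_2=0$, simplifies to $qa_1(1-p)a_2r=0$, which is condition (i) of the corollary. Condition (ii) of Theorem \ref{thm9} asserts (via Definition \ref{def8}) that $a_1a_2$ is $(b_2,c_1)$-invertible, i.e.\ $(r,q)$-invertible, which since $r,q\in\R^\bullet$ is the same as being Bott-Duffin $(r,q)$-invertible, together with $(a_1a_2)^{-(b_2,\hbox{ }c_1)}=a_2^{-(b_2,\hbox{ }c_2)}a_1^{-(b_1,\hbox{ }c_1)}$, which now reads $(a_1a_2)^{-(r,\hbox{ }q)}=a_2^{-(r,\hbox{ }p)}a_1^{-(p,\hbox{ }q)}$; this is condition (ii) of the corollary. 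There is no genuinely hard step: the only point requiring care is the bookkeeping of the identification between Bott-Duffin and $(b,c)$-inverses and the choice of inner inverses making $h_2c_2=b_1g_1$; once these are fixed, invoking Theorem \ref{thm9} finishes the proof.
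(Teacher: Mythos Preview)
Your proposal is correct and follows exactly the same approach as the paper: specialize Theorem \ref{thm9} with $b_1=p$, $c_1=q$, $b_2=r$, $c_2=p$ and take each idempotent as its own inner inverse so that $h_2c_2=b_1g_1=p$. The paper's proof is the one-line version of precisely this argument.
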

\begin{proof}Apply Theorem \ref{thm9} and use $p$ (respectively $q$, $r$) as a generalized inverse of the idempotent $p$ (respectively $q$, $r$).
\end{proof}
\bibliographystyle{amsplain}

\section{Properties of the $(b, c)$-inverse in Banach algebras}

\noindent In this section several results concerning representations, limits, approximation and continuity of the 
$(b, c)$-inverse will be studied in the context of Banach algebras. In first place a lemma will be presented. \par

\begin{lem}\label{lem22}Let $\A$ be a Banach algebra and consider $a\in\A$ and $b$, $c\in \hat{\A}$.
Let $g\in b\{ 1\}$ and $h\in c\{ 1\}$. Then, the following statements are equivalent.\par
\noindent {\rm (i)} The element $a$ is $(b, c)$-invertible.\par
\noindent {\rm (ii)} The outer inverse $a^{(2,\hbox{ } l)}_{bg, \hbox{ } 1-hc}$ exists.\par
\noindent Furthermore, in this case $a^{-(b,\hbox{ }c)}$ and $a^{(2,\hbox{ }l)}_{bg,\hbox{ } 1-hc}$ coincide.
\end{lem}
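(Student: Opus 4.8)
The plan is to read off the defining equations of $a^{(2,\hbox{ }l)}_{bg,\hbox{ }1-hc}$ from Definition~\ref{def21}, rewrite the range and kernel ideals occurring there in terms of $b$ and $c$, recognise the result as the definition of the hybrid $(b,c)$-inverse (Definition~\ref{def10}\,(ii)), and then invoke Corollary~\ref{coro12} to pass to the $(b,c)$-inverse itself. This avoids checking the two implications of the equivalence separately.

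First I would record three elementary facts. Since $g\in b\{1\}$ and $h\in c\{1\}$, the elements $bg$ and $hc$ are idempotents, because $(bg)^2=(bgb)g=bg$ and $(hc)^2=h(chc)=hc$; hence $1-bg,\,1-hc\in\A^\bullet$ and the outer inverse of Definition~\ref{def21} is meaningful for $p=bg$, $q=1-hc$. Next, $bg\A=b\A$: indeed $bg=b\cdot g\in b\A$, while $b=bgb=(bg)b\in bg\A$. Finally, $c^{-1}(0)=(hc)^{-1}(0)=(1-hc)\A$: the identity $chc=c$ gives $hcx=0\iff cx=0$, and since $1-hc$ is an idempotent one has $\{x\in\A:hcx=0\}=(1-hc)\A$.

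With these in hand, the conditions defining $y=a^{(2,\hbox{ }l)}_{bg,\hbox{ }1-hc}$, namely $y=yay$, $y\A=bg\A$ and $y^{-1}(0)=(1-hc)\A$, become, after the substitutions above, exactly $y=yay$, $y\A=b\A$ and $y^{-1}(0)=c^{-1}(0)$, which is precisely the definition of the hybrid $(b,c)$-inverse of $a$. Consequently $a^{(2,\hbox{ }l)}_{bg,\hbox{ }1-hc}$ exists if and only if the hybrid $(b,c)$-inverse of $a$ exists, and when they exist they are equal. Since $b,c\in\hat{\A}$, Corollary~\ref{coro12} states that the hybrid $(b,c)$-inverse of $a$ exists if and only if $a$ is $(b,c)$-invertible, in which case the two coincide. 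Chaining the two equivalences yields (i)$\Leftrightarrow$(ii) together with $a^{-(b,\hbox{ }c)}=a^{(2,\hbox{ }l)}_{bg,\hbox{ }1-hc}$. (Alternatively one could quote the remark after Definition~\ref{def21} that the $(p,q,l)$-outer inverse is the hybrid $(p,1-q)$-inverse, and then argue that the hybrid $(bg,hc)$- and $(b,c)$-inverses coincide because $bg\A=b\A$ and $(hc)^{-1}(0)=c^{-1}(0)$.)

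I do not anticipate a genuine obstacle here: the whole argument is bookkeeping. The only point demanding care is the index flip $q\leftrightarrow 1-q$ built into Definition~\ref{def21}, together with the verification that replacing $b$ by the idempotent $bg$ leaves the relevant right ideal unchanged while replacing $c$ by $hc$ leaves the relevant kernel ideal unchanged — both of which reduce to the inner-inverse identities $bgb=b$, $chc=c$ and the idempotency of $bg$ and $hc$.
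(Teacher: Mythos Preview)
Your argument is correct and follows essentially the same line as the paper's proof: both identify $a^{(2,\,l)}_{bg,\,1-hc}$ with a hybrid inverse and then invoke Corollary~\ref{coro12}. The only cosmetic difference is that the paper first passes through Proposition~\ref{pro16} to replace $(b,c)$ by the idempotent pair $(bg,hc)$ and then applies Corollary~\ref{coro12} to that pair, whereas you translate the ideal conditions $bg\A=b\A$ and $(1-hc)\A=c^{-1}(0)$ directly and apply Corollary~\ref{coro12} to $(b,c)$; your alternative parenthetical remark is exactly the paper's route.
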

\begin{proof}Recall that according to Proposition \ref{pro16}, statement (i) is equivalent to the existence
of the Bott-Duffin $(bg, hc)$-inverse of $a$, moreover,  this two inverses coincide, i.e., $a^{-(b,\hbox{ }c)}=a^{-(bg,\hbox{ }hc)}$.
In addition, according to Corollary \ref{coro12}, statement (i) is equivalent to the existence of the
hybrid  $(bg, hc)$-inverse of $a$ and this latter inverse coincides with $a^{-(b,\hbox{ }c)}=a^{-(bg,\hbox{ }hc)}$.
However, according to Definition \ref{def21}, the hybrid $(bg, hc)$-inverse of $a$ coincides with  $a^{(2,\hbox{ } l)}_{bg, \hbox{ }1-hc}$.
\end{proof}

\indent In order to prove the main results of this section, some facts need to be recalled first.\par

\begin{rema}\label{rem28}\rm Let $\A$ be a Banach algebra and consider $a\in\A$ and $b$, $c\in\hat{\A}$ such that 
$a^{-(b,\hbox{ }c)}$ exists. \par
\noindent (i) Since $a^{-(b,\hbox{ }c)}\in b\A c$ (Theorem \ref{thm4}), 
$a^{-(b,\hbox{ }c)}\A\subseteq b\A$ and $c^{-1}(0)\subseteq (a^{-(b,\hbox{ }c)})^{-1}(0)$. 
On the other hand, since according again to Theorem \ref{thm4}
$$
bg=a^{-(b,\hbox{ }c)}hcabg,\hskip.5truecm  hc= hcabga^{-(b,\hbox{ }c)},
$$
($g\in b\{ 1\}$  and $h\in c\{ 1\}$), $b\A\subseteq a^{-(b,\hbox{ }c)}\A$  and $(a^{-(b,\hbox{ }c)})^{-1}(0)\subseteq c^{-1}(0)$.
Thus, $a^{-(b,\hbox{ }c)}\A= b\A$ and $(a^{-(b,\hbox{ }c)})^{-1}(0)=c^{-1}(0)$.\par
\noindent Let $\A_{op}$ be the Banach algebra, which as Banach space coincides with $\A$, but whose product is the opposite
of the one in $\A$, i.e., $x\cdot_{op} y=yx$ ($x$, $y\in\A$). \par
\noindent (ii) Note that the $(b, c)$-inverse of $a\in \A$ exists if and only if the $(c, b)$-inverse of $a\in\A_{op}$ exits.
Moreover, in this case both inverses coincide.\par
\noindent (iii) Let $v\in \A$ such that $v\A=b \A$ and $v^{-1}(0)= c^{-1}(0)$. Then, according to Lemma \ref{lem22} and \cite[Theorem 4.1]{CX},
$av$ and $va$ are group invertible and $a^{-(b,\hbox{ }c)}=(va)^\sharp v=v(av)^\sharp$. In particular,
$$
va\A=v\A=b\A=bg\A, \hskip.5truecm (av)^{-1}(0)= v^{-1}(0)=c^{-1}(0)=(hc)^{-1}(0).
$$
Now an easy calculation proves the following fact: given $p$, $q\in \A^\bullet$, $p\A=q\A$ in $\A$ if and only if
$p^{-1}(0)=q^{-1}(0)$ in $\A_{op}$. Consequently, applying this result to $p=va$ and $q=bg$,
it is not dificult to prove that the identity $v \A=b \A$ in $\A$ is equivalent to the identity $v^{-1}(0)=b^{-1}(0)$  in $\A_{op}$.\par
\noindent Similarly, the following statement holds: $p^{-1}(0)=q^{-1}(0)$ in $\A$ if and only if $p\A=q\A$ in $\A_{op}$. 
Therefore, applying this latter statement to $p=av$ and $q=hc$, it is not difficult to prove that $v^{-1}(0)= c^{-1}(0)$ in $\A$
is equivalent to $v\A=c\A$ in $\A_{op}$.
\end{rema}

In the following theorem an integral representation of the $(b, c)$-inverse will be derived from the corresponding representation
of the outer inverse  $a^{(2,\hbox{ }l)}_{p, \hbox{ }q}$.

\begin{thm}\label{theo23}Let $\A$ be a Banach algebra and consider $a\in\A$ and $b$, $c\in\hat{\A}$ such that 
$a^{-(b,\hbox{ }c)}$ exists. Consider $v\in\A$ such that $v\A=b\A$ and $v^{-1}(0)=c^{-1}(0)$ and suppose that for every $\lambda\in\sigma (av)$, {\rm Re}$(\lambda )\ge 0$.
Then 
$$
a^{-(b,\hbox{ }c)}=\int_0^\infty ve^{-(av)t}dt=\int_0^\infty e^{-(va)t}vdt.
$$
\end{thm}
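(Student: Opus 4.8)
The plan is to reduce both equalities to the elementary Laplace‑transform identity $\int_0^\infty e^{-yt}\,dt=y^{-1}$, valid whenever $\sigma(y)$ lies in the open right half‑plane, applied inside a corner of $\A$ in which the relevant element is invertible. By Remark \ref{rem28}(iii) (which rests on Lemma \ref{lem22} and \cite[Theorem 4.1]{CX}) the elements $av$ and $va$ are group invertible, $(av)^{-1}(0)=v^{-1}(0)=c^{-1}(0)$, and
$$
a^{-(b,\hbox{ }c)}=v(av)^\sharp=(va)^\sharp v .
$$
Hence it suffices to prove $\int_0^\infty ve^{-(av)t}\,dt=v(av)^\sharp$ and $\int_0^\infty e^{-(va)t}v\,dt=(va)^\sharp v$; the two integrals then coincide because $v(av)^\sharp=(va)^\sharp v$.

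For the first equality I would put $e=(av)(av)^\sharp=(av)^\sharp(av)$, so that $e$ commutes with $av$ (hence with $e^{-(av)t}$), $(av)e=av$, and $(av)(1-e)=0$. From $(av)(1-e)=0$ the power series collapses to $e^{-(av)t}(1-e)=1-e$, so $e^{-(av)t}=e^{-(av)t}e+(1-e)$. Since $1-e\in(av)^{-1}(0)=v^{-1}(0)$ we have $v(1-e)=0$, whence $ve^{-(av)t}=ve^{-(av)t}e$. Now work in the Banach algebra $e\A e$ (with unit $e$): there $av$ is invertible with inverse $(av)^\sharp$ and $\sigma_{e\A e}(av)=\sigma(av)\setminus\{0\}$, which by hypothesis lies in the open right half‑plane, so $\|e^{-(av)t}e\|\le Ke^{-\delta t}$ for some $\delta>0$. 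The Bochner integral therefore converges, and from $\int_0^T e^{-(av)t}e\,dt=(e-e^{-(av)T}e)(av)^\sharp$, letting $T\to\infty$, one gets $\int_0^\infty e^{-(av)t}e\,dt=(av)^\sharp$. Left‑multiplying by $v$ yields $\int_0^\infty ve^{-(av)t}\,dt=v(av)^\sharp=a^{-(b,\hbox{ }c)}$.

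The second equality is symmetric, with $f=(va)(va)^\sharp$ in place of $e$; the one new ingredient is $(1-f)v=0$. This comes from $va\,a^{-(b,\hbox{ }c)}=v$, which holds because Definition \ref{def1}(ii) gives $c=ca\,a^{-(b,\hbox{ }c)}$, so $1-a\,a^{-(b,\hbox{ }c)}\in c^{-1}(0)=v^{-1}(0)$ and $v(1-a\,a^{-(b,\hbox{ }c)})=0$; hence $fv=(va)(va)^\sharp v=(va)\,a^{-(b,\hbox{ }c)}=v(a\,a^{-(b,\hbox{ }c)})=v$. Then $e^{-(va)t}v=e^{-(va)t}fv$, and since $\sigma(va)\setminus\{0\}=\sigma(av)\setminus\{0\}$ again lies in the open right half‑plane, the same argument inside $f\A f$ gives $\int_0^\infty e^{-(va)t}v\,dt=(va)^\sharp v=a^{-(b,\hbox{ }c)}$. (Alternatively, this second identity follows from the first applied in the opposite algebra $\A_{op}$, via Remark \ref{rem28}(ii).)

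The point that I expect to need the most care is the convergence of the Bochner integrals: passing to the corner $e\A e$ (resp.\ $f\A f$) is exactly what makes $av$ (resp.\ $va$) invertible there, and the spectral hypothesis on $\sigma(av)$ is used precisely to guarantee $\delta=\inf\{\operatorname{Re}\lambda:\lambda\in\sigma(av)\setminus\{0\}\}>0$, so that $e^{-(av)T}e\to0$ as $T\to\infty$. The remaining ingredients — the idempotent bookkeeping and the estimate $\|e^{-(av)t}e\|\le Ke^{-\delta t}$ — are routine. If preferred, the whole computation can be replaced by quoting the integral representation of the outer inverse $a^{(2,\hbox{ }l)}_{bg,\hbox{ }1-hc}$ from \cite{CX} together with Lemma \ref{lem22}.
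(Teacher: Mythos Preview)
Your argument is essentially correct and takes a more hands-on route than the paper. The paper's proof of the first equality is a one-line citation: it invokes Lemma~\ref{lem22} to identify $a^{-(b,\hbox{ }c)}$ with $a^{(2,\hbox{ }l)}_{bg,\hbox{ }1-hc}$, notes that $bg\A=b\A$ and $(1-hc)\A=c^{-1}(0)$, and then applies \cite[Theorem~4.9]{CX} directly; for the second equality it passes to the opposite algebra $\A_{op}$ via Remark~\ref{rem28}(ii)--(iii), exactly the alternative you mention in your parenthetical. Your main argument instead unpacks the integral explicitly: you use the group-inverse idempotent $e=(av)(av)^\sharp$ to peel off the kernel direction, reduce to the corner $e\A e$ in which $av$ is invertible, and evaluate the Laplace integral there. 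This buys self-containment---no appeal to the external representation theorem in \cite{CX}---and makes visible precisely where the spectral hypothesis and the identities $v(1-e)=0$, $(1-f)v=0$ enter. Your closing remark, that one may instead quote \cite{CX} together with Lemma~\ref{lem22}, \emph{is} the paper's proof.

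One point deserves care. You assert that $\sigma_{e\A e}(av)=\sigma(av)\setminus\{0\}$ ``lies in the open right half-plane'' and hence that $\delta=\inf\{\operatorname{Re}\lambda:\lambda\in\sigma(av)\setminus\{0\}\}>0$. The stated hypothesis is only $\operatorname{Re}\lambda\ge 0$ for $\lambda\in\sigma(av)$; removing $0$ does not by itself rule out nonzero purely imaginary points, and in that case $\delta=0$ and your exponential bound $\|e^{-(av)t}e\|\le Ke^{-\delta t}$ fails. This is less a defect in your strategy than an imprecision in the theorem as written (the integral cannot converge otherwise), and the paper's proof, being a bare citation of \cite{CX}, does not address it either. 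Under the natural reading that the nonzero spectrum has strictly positive real part, your argument goes through verbatim.
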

\begin{proof}Let $g\in b\{ 1\}$ and $h\in c\{ 1\}$. According to Lemma \ref{lem22}, $a^{(2,\hbox{ } l)}_{bg, \hbox{ } 1-hc}$ exits and 
$a^{-(b,\hbox{ }c)}=a^{(2,\hbox{ }l)}_{bg,\hbox{ } 1-hc}$. Next note that $bg\A=b\A$ and $c^{-1}(0)=(1-hc)\A$. Then apply \cite[Theorem 4.9]{CX} to deduce that
$$
a^{-(b,\hbox{ }c)}=\int_0^\infty ve^{-(av)t}dt.
$$
\indent To prove the second equality, first note that according to  statement (ii) of Remark \ref{rem28}, the  $(c, b)$-inverse of $a\in\A_{op}$
exists and coincide with $a^{-(b,\hbox{ }c)}$. In addition, according to statement (iii) of Remark \ref{rem28}, $v\in\A_{op}$ is such that  $v\cdot_{op}\A=c\cdot_{op}\A$ and
$v^{-1}(0)=b^{-1}(0)$ (in $\A_{op}$). Moreover, since $\sigma (a\cdot_{op}v)=\sigma (va)$ and $\sigma (va)\setminus\{0\}= \sigma (av)\setminus\{0\}$ (\cite[Chapter 1, Section 5, Proposition 3]{BD}),
$\lambda\in\sigma (av)$ is such that {\rm Re}$(\lambda )\ge 0$ if and only if
$\lambda\in\sigma (a\cdot_{op} v)=\sigma (va)$ is such that {\rm Re}$(\lambda )\ge 0$. Therefore, the second integral representation can be easily deduced from the identity that
has already been proved.
\end{proof}

\indent Next the $(b, c)$-inverse will be represented by means of a  convergent series. However, to this end,
it is necessary to recall some facts. \par

\indent Let $\A$ be a Banach algebra and consider $a\in\A$ and $b$, $c\in\hat{\A}$.  According to \cite[Theorem 4.1]{CX} 
and Lemma \ref{lem22}, if $a^{-(b,\hbox{ }c)}$ exists and $v\in\A$ is such that $v\A=b\A$ and $v^{-1}(0)=c^{-1}(0)$,
then $va$ is group invertible and $a^{-(b,\hbox{ }c)}=(va)^\sharp v$. Set $p_{va}=va(va)^\sharp$.
Note that since $va$ is group invertible, then $va=p_{va}va=vap_{va}=p_{va}vap_{va}$. In addition, $va\in (p_{va}\A p_{va})^{-1}$ and 
the inverse of $va$ in $p_{va}\A p_{va}$ is $(va)^\sharp\in p_{va}\A p_{va}$ ($p_{va}\A p_{va}$ is a Banach algebra with unit $p_{va}$).
Recall also that according to  \cite[Theorem 4.1]{CX} 
and Lemma \ref{lem22}, $av$ is group invertible, $a^{-(b,\hbox{ }c)}=v(av)^\sharp$, and if $p_{av}=av(av)^\sharp\in\A^\bullet$,
then $av=p_{av}av=avp_{av}=p_{av}avp_{av}$. \par

\begin{thm}\label{theo24}Let $\A$ be a Banach algebra and consider $a\in\A$ and $b$, $c\in\hat{\A}$ such that 
$a^{-(b,\hbox{ }c)}$ exists. Let $v\in\A$ such that $v\A=b\A$ and $v^{-1}(0)=c^{-1}(0)$ and consider $\beta\in\mathbb{C}\setminus\{0\}$
such that $\parallel p_{va}-\beta va\parallel <1$. Then
$$
a^{-(b,\hbox{ }c)}=\beta \sum_{n=0}^\infty (1-\beta va)^nv=\beta\sum_{n=0}^\infty v(1-\beta av)^n.
$$ 
\end{thm}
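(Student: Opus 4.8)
The plan is to transport the classical Neumann series argument into the corner Banach algebra $p_{va}\A p_{va}$. Recall from the discussion preceding the statement (which rests on Lemma~\ref{lem22} and \cite[Theorem 4.1]{CX}) that $va$ is group invertible, that $a^{-(b,\hbox{ }c)}=(va)^\sharp v$, that the idempotent $p_{va}=va(va)^\sharp$ satisfies $va=p_{va}va=vap_{va}$, and that $va$ is invertible in the unital Banach algebra $p_{va}\A p_{va}$ (with unit $p_{va}$) with inverse $(va)^\sharp$. First I would record two elementary facts. Since $v\A=b\A=va\A=p_{va}\A$ (Remark~\ref{rem28}~(iii)) and $v\in p_{va}\A$, we get $p_{va}v=v$. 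Moreover $p_{va}$ commutes with $va$, hence with $1-\beta va$, and $p_{va}(1-\beta va)=p_{va}-\beta va$; an immediate induction using $p_{va}^2=p_{va}$ then gives $p_{va}(1-\beta va)^n=(p_{va}-\beta va)^n$ for every $n\ge 0$. Note also that $p_{va}-\beta va\in p_{va}\A p_{va}$.

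Next, since $\beta\neq 0$ and $\|p_{va}-\beta va\|<1$, the Neumann series yields that $\beta va$ is invertible in $p_{va}\A p_{va}$ with inverse $\sum_{n=0}^\infty(p_{va}-\beta va)^n$; comparing with the known inverse $\beta^{-1}(va)^\sharp$ and using uniqueness, $(va)^\sharp=\beta\sum_{n=0}^\infty(p_{va}-\beta va)^n$, the series converging in $\A$. Multiplying on the right by $v$ and using the two facts from the previous step,
$$
a^{-(b,\hbox{ }c)}=(va)^\sharp v=\beta\sum_{n=0}^\infty(p_{va}-\beta va)^n v=\beta\sum_{n=0}^\infty p_{va}(1-\beta va)^n v=\beta\sum_{n=0}^\infty(1-\beta va)^n v,
$$
the last equality because $p_{va}(1-\beta va)^n v=(1-\beta va)^n p_{va}v=(1-\beta va)^n v$. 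This establishes the first representation.

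For the second equality I would use the shift identity $v(1-\beta av)=v-\beta vav=(1-\beta va)v$, from which $v(1-\beta av)^n=(1-\beta va)^n v$ follows by a one-line induction. Hence $\beta\sum_{n=0}^\infty v(1-\beta av)^n=\beta\sum_{n=0}^\infty(1-\beta va)^n v=a^{-(b,\hbox{ }c)}$; in particular the left-hand series converges, so no separate norm hypothesis involving $av$ or $p_{av}$ is needed. The only genuinely delicate point is that the hypothesis controls $\|p_{va}-\beta va\|$ and not $\|1-\beta va\|$, so the Neumann series cannot be run in $\A$ directly; the whole argument must live in the corner $p_{va}\A p_{va}$ and then be pushed back to $\A$ via $p_{va}v=v$ and the commutation relations, which is exactly what the bookkeeping of the first step provides.
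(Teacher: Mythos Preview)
Your argument for the first series is essentially the paper's: both run the Neumann series in the corner algebra $p_{va}\A p_{va}$ to obtain $(va)^\sharp=\beta\sum_{n\ge 0}(p_{va}-\beta va)^n$, and both use $(1-p_{va})v=0$ to replace $(p_{va}-\beta va)^n v$ by $(1-\beta va)^n v$. Your derivation of $p_{va}v=v$ from $v\in v\A=p_{va}\A$ is a touch cleaner than the paper's explicit computation via $p_{va}=bg+a^{-(b,\,c)}hca(1-bg)$.

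For the second series your route is genuinely different and shorter. The paper passes to the opposite algebra $\A_{op}$, where the $(c,b)$-inverse of $a$ coincides with $a^{-(b,\,c)}$, and then needs to check the hypothesis $\|p_{av}-\beta av\|<1$ in order to reapply the first part; it does this by a spectral argument showing $\sigma(L_{p_{av}-\beta av})=\sigma(L_{p_{va}-\beta va})$ and then identifying the norm with the spectral radius. You sidestep all of this with the intertwining identity $v(1-\beta av)=(1-\beta va)v$, which by induction gives $v(1-\beta av)^n=(1-\beta va)^n v$ termwise, so the two series have identical partial sums and the second converges automatically. Your approach avoids the opposite-algebra bookkeeping and the spectral machinery entirely; the paper's approach is more systematic in that it literally reduces the second claim to the first, but at the cost of an extra page of spectral computation.
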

\begin{proof}Since $\parallel p_{va}-\beta va\parallel <1$, 
$$
(va)^\sharp=\beta \sum_{n=0}^\infty (p_{va}-\beta va)^n.
$$
\indent On the other hand, since $(1-\beta va)= p_{va}- \beta va + 1-p_{va}$, it is not difficult to prove that 
$(1-\beta va)^n= (p_{va}-\beta va)^n + 1-p_{va}$ ($n\in\mathbb{N}$). Thus, given $k\in\mathbb{N}$,
$$
\sum_{n=0}^k (1-\beta va)^n = \sum_{n=0}^k (p_{va}-\beta va)^n + (k+1)(1-p_{va}).
$$
\indent Now let $g\in b\{ 1\}$ and $h\in c\{ 1\}$. Note that according to \cite[Theorem 4.1]{CX} and Theorem \ref{thm4}

\begin{align*}
p_{va}&=(va)^\sharp va=a^{-(b,\hbox{ }c)}a= a^{-(b,\hbox{ }c)}hcabg +a^{-(b,\hbox{ }c)}hca(1-bg)\\
&= bg+a^{-(b,\hbox{ }c)}hca(1-bg).\\
\end{align*}
\noindent Since $v\in b\A=bg\A$, there is $z\in\A$ such that $v=bgz$. 
Thus,
$$
p_{va}v=bgv+a^{-(b,\hbox{ }c)}hca(1-bg)v=v,
$$
equiavalently, $(1-p_{va})v=0$.\par

\indent Consequently, given $k\in\mathbb{N}$,
$$
\beta \sum_{n=0}^k (1-\beta va)^nv=\beta (\sum_{n=0}^k (1-\beta va)^n)v= \beta\sum_{n=0}^k (p_{va}-\beta va)^nv.
$$

\indent Therefore, according to  \cite[Theorem 4.1]{CX},
$$
\beta \sum_{n=0}^\infty (1-\beta va)^nv= \beta\sum_{n=0}^\infty (p_{va}-\beta va)^nv=(va)^\sharp v=a^{-(b,\hbox{ }c)}.
$$

\indent To prove that $a^{-(b,\hbox{ }c)}$ coincides with the second serie, proceed as in the proof of Theorem \ref{theo23}.
In particular, recall that the $(c, b)$-inverse of $a\in\A_{op}$ exists and coincides with $a^{-(b,\hbox{ }c)}$. Moreover, 
$v\in\A_{op}$ is such that  $v\cdot_{op}\A=c\cdot_{op}\A$ and $v^{-1}(0)=b^{-1}(0)$ (in $\A_{op}$). Thus, to derive the proof of the
second representation from what has been already proved, it is enough to show that  
$\parallel p_{av}-\beta av\parallel=\parallel p_{va}-\beta va\parallel$.\par

\indent To this end, recall that since $va$ is group invertible, $\A=p_{va}\A\oplus (1-p_{va})\A$. Next consider the operator $L_{va}\colon \A\to \A$. 
Note that  (i) $L_{va}\mid_{(1-p_{va})\A}=0$, (ii) since $L_{va}\mid_{p_{va}\A}$ is invertible ($(L_{va}\mid_{p_{va}\A})^{-1}=L_{(va)^\sharp}\mid_{p_{va}\A}$), 
$0\notin\sigma (L_{va}\mid_{p_{va}\A})$, (iii) since $\sigma (va)=\sigma (L_{va})$ (\cite[Chapter 1, Section 5, Proposition 4 (ii)]{BD}),
$\sigma (va)=\{0\}\cup \sigma (L_{va}\mid_{p_{va}\A})$.\par

\indent On the other hand, according to \cite[Theorem 4.1]{CX} and Lemma \ref{lem22}, $av$ is group invertible. In particular, since $p_{av}=av(av)^\sharp\in\A^\bullet$, 
$\A=p_{av}\A\oplus (1- p_{av})\A$. In addition, using similar arguments to the ones in the previous paragraph, it is not difficult to prove that $\sigma (av)=\{0\}\cup \sigma (L_{av}\mid_{p_{av}\A})$
and $0\notin \sigma (L_{av}\mid_{p_{av}\A})$.\par
\indent Now since $\sigma (av)\setminus\{0\}=\sigma (va)\setminus\{0\}$ (\cite[Chapter 1, Section 5, Proposition 3]{BD}) and $0\notin  (\sigma (L_{av}\mid_{p_{av}\A})\cup \sigma (L_{va}\mid_{p_{va}\A}))$,
 $\sigma (L_{av}\mid_{p_{av}\A})= \sigma (L_{va}\mid_{p_{va}\A})$.\par

\indent Moreover, since $L_{p_{va}-\beta va}(p_{va}\A)\subseteq p_{va}\A$ and $L_{p_{va}-\beta va}(1-p_{va}\A)=0$, 
$\sigma(L_{p_{va}-\beta va})=\sigma(L_{p_{va}-\beta va}\mid_{p_{va}\A})\cup\{0\}$. Note that, according to the Functional
Calculus Theorem applied to the  polynomial $P(X)=1-\beta X\in\mathbb{C}[X]$ and the operator 
$L_{p_{va}-\beta va}\mid_{p_{va}\A}=(I-\beta L_{va})\mid_{p_{va}\A}$,  $\sigma(L_{p_{va}-\beta va}\mid_{p_{va}\A})=1-\beta\sigma(L_{va}\mid_{p_{va}\A})$.
However, using similar arguments it is possible to prove the following facts: $\sigma(L_{p_{av}-\beta av})=\sigma(L_{p_{av}-\beta av}\mid_{p_{av}\A})\cup\{0\}$
and $\sigma(L_{p_{av}-\beta av}\mid_{p_{av}\A})=1-\beta\sigma(L_{av}\mid_{p_{av}\A})$. As a result, 
$\sigma(L_{p_{va}-\beta va})=\sigma(L_{p_{av}-\beta av})$.\par

\indent Now, according to \cite[Chapter 1, Section 5, Theorem 8]{BD} and \cite[Chapter 1, Section 5, Proposition 4 (ii)]{BD},

\begin{align*}
\parallel p_{av}-\beta av\parallel&=\hbox{ max}\{\mid \lambda\mid\colon \lambda\in \sigma( p_{av}-\beta av)\}=\hbox{ max}\{\mid \lambda\mid\colon \lambda\in \sigma( L_{p_{av}-\beta av)}\}\\
&=\hbox{ max}\{\mid \lambda\mid\colon \lambda\in \sigma( L_{p_{va}-\beta va)}\}=\hbox{ max}\{\mid \lambda\mid\colon \lambda\in \sigma( p_{va}-\beta va)\}\\
&=\parallel p_{va}-\beta va\parallel\\
\end{align*}
\end{proof}

\indent Now due to \cite[Theorem 4.7]{CX} the $(b, c)$-inverse will be presented as a limit.\par

\begin{thm}\label{thm27}Let $\A$ be a Banach algebra and consider $a\in \A$ and $b$, $c\in\hat{\A}$ such that $a^{-(b,\hbox{ }c)}$ exists.
Let $v\in\A$ such that $v \A=b\A$ and $v^{-1}(0)=c^{-1}(0)$. Then 
$$
a^{-(b,\hbox{ }c)}=\lim_{\substack{\text{$\lambda\to 0$}\\\text{$\lambda\notin \sigma(-av)$}}} v(\lambda+av)^{-1}=
\lim_{\substack{\text{$\lambda\to 0$}\\\text{$\lambda\notin \sigma(-va)$}}} (\lambda+va)^{-1}v.
$$  
\end{thm}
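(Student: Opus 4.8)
The plan is to reduce the statement to a known limit representation of the outer inverse $a^{(2,\hbox{ }l)}_{p,\hbox{ }q}$ and then to recover the companion identity by passing to the opposite algebra, exactly as in the proofs of Theorems \ref{theo23} and \ref{theo24}.

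First I would fix $g\in b\{1\}$ and $h\in c\{1\}$ and invoke Lemma \ref{lem22}: the outer inverse $a^{(2,\hbox{ }l)}_{bg,\hbox{ }1-hc}$ exists and equals $a^{-(b,\hbox{ }c)}$. Since $bg\A=b\A=v\A$ and $(1-hc)\A=c^{-1}(0)=v^{-1}(0)$, the element $v$ plays for $a^{(2,\hbox{ }l)}_{bg,\hbox{ }1-hc}$ precisely the role demanded in \cite[Theorem 4.7]{CX}, so that result applies and yields
$$
a^{-(b,\hbox{ }c)}=\lim_{\substack{\lambda\to0\\\lambda\notin\sigma(-av)}}v(\lambda+av)^{-1}.
$$
To be sure the limit is meaningful I would note that, by \cite[Theorem 4.1]{CX} together with Lemma \ref{lem22}, $av$ (and likewise $va$) is group invertible, so $0$ is at worst a simple pole of the resolvent of $av$ and $v(\lambda+av)^{-1}$ converges as $\lambda\to0$.

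For the second equality I would move to $\A_{op}$. By Remark \ref{rem28}(ii) the $(c,b)$-inverse of $a$ in $\A_{op}$ exists and coincides with $a^{-(b,\hbox{ }c)}$, and by Remark \ref{rem28}(iii) the element $v\in\A_{op}$ satisfies $v\cdot_{op}\A=c\cdot_{op}\A$ and $v^{-1}(0)=b^{-1}(0)$ in $\A_{op}$. Applying the identity just proved inside $\A_{op}$ gives $a^{-(b,\hbox{ }c)}=\lim v\cdot_{op}(\lambda+a\cdot_{op}v)^{-1}$, the limit running over $\lambda\notin\sigma_{\A_{op}}(-a\cdot_{op}v)$. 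I would then translate everything back to $\A$ using that $a\cdot_{op}v=va$, that $\A$ and $\A_{op}$ have the same invertible elements and the same inverses, and that the spectrum is unchanged under passage to the opposite algebra: this turns the displayed limit into $\lim_{\lambda\to0,\,\lambda\notin\sigma(-va)}(\lambda+va)^{-1}v$, as required.

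The delicate step is the first one, namely checking that the data $(v,bg,1-hc)$ fit the hypotheses of \cite[Theorem 4.7]{CX} and confirming, via the group invertibility of $av$ and $va$, that the limits actually exist; the opposite-algebra argument in the second step is then routine, of exactly the kind already used in Theorems \ref{theo23} and \ref{theo24}.
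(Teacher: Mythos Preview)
Your proposal is correct and follows essentially the same route as the paper: invoke Lemma \ref{lem22} to identify $a^{-(b,\hbox{ }c)}$ with $a^{(2,\hbox{ }l)}_{bg,\hbox{ }1-hc}$, check that $v$ satisfies the hypotheses of \cite[Theorem 4.7]{CX} via $v\A=bg\A$ and $v^{-1}(0)=(1-hc)\A$, use the group invertibility of $av$ (from \cite[Theorem 4.1]{CX}) to ensure $0$ is isolated in $\sigma(-av)$, and then obtain the second identity by the opposite-algebra argument of Remark \ref{rem28}. The only cosmetic difference is that the paper cites \cite[Theorem 4]{K} for the isolation of $0$ rather than phrasing it in terms of a simple pole.
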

\begin{proof}To prove the first identity, recall that according to Lemma \ref{lem22}, $a^{-(b,\hbox{ }c)}=a^{(2,\hbox{ } l)}_{bg,\hbox{ }1- hc}$.
In addition, given $g\in b\{ 1\}$ and $h\in c\{ 1\}$, according to the hypotheses,  $v\A=bg\A$ and $v^{-1}(0)=(1-hc)\A$. Thus, according to \cite[Theorem 4.1]{CX}, $av$ is group invertible.
In particular, acording to \cite[Theorem 4]{K}, $0$ is an isolated point of $\sigma (-av)$. Then, there is an open set $U\subset\mathbb{C}$
such that $0\in U$ and $U\cap \sigma (-av)=\{0\}$. To conclude the proof, apply \cite[Theorem 4.7]{CX}.\par

\indent In order to prove the remaining limit, proceed as in the proof of Theorem \ref{theo23}.
In particular, recall that $(c, b)$-inverse of $a\in\A_{op}$ exists and coincide with $a^{-(b,\hbox{ }c)}$. Moreover, 
$v\in\A_{op}$ is such that  $v\cdot_{op}\A=c\cdot_{op}\A$ and $v^{-1}(0)=b^{-1}(0)$ (in $\A_{op}$). In addition,
according to \cite[Chapter 1, Section 5, Proposition 3]{BD}, $\sigma (va)\setminus\{0\}=\sigma (av)\setminus\{0\}$. Therefore, 
the second identity can be derived from the first.
\end{proof} 

\indent Under the same hypothesis of Theorem \ref{thm27}, to establish a bound for $\parallel a^{-(b,\hbox{ }c)}-v(\lambda+av)^{-1}\parallel$,
it is necessary to note the following facts.\par

\begin{rema}\label{rem32}\rm Let $\A$ be a Banach algebra and consider $x$, $y\in\A$ two invertible elements. Then,
$$
x^{-1}-y^{-1}=y^{-1}(y-x)x^{-1}=(y^{-1}-x^{-1})(y-x)x^{-1} +x^{-1}(y-x)x^{-1}.
$$
Thus,
$$
\parallel x^{-1}-y^{-1}\parallel\le \parallel y^{-1}-x^{-1}\parallel\parallel y-x\parallel\parallel x^{-1}\parallel +\parallel x^{-1}\parallel^2\parallel y-x\parallel.
$$
\noindent In particular, when $\parallel y-x\parallel<\frac{1}{\parallel x^{-1}\parallel}$,
$$
\parallel x^{-1}-y^{-1}\parallel\le\frac{\parallel x^{-1}\parallel^2\parallel y-x\parallel}{1-\parallel y-x\parallel\parallel x^{-1}\parallel}.
$$
\end{rema}

\begin{rema}\label{rem29}\rm Let $\A$ be a Banach algebra and consider $a\in \A$ and $b$, $c\in\hat{\A}$ such that $a^{-(b,\hbox{ }c)}$ exists.
Let $g\in b\{ 1\}$, $h\in c\{ 1\}$  and  $v\in\A$ such that $v \A=b\A$ and $v^{-1}(0)=c^{-1}(0)$. Since according to Lemma \ref{lem22}, $a^{-(b,\hbox{ }c)}=a^{(2,\hbox{ } l)}_{bg,\hbox{ }1- hc}$,
due to \cite[Theorem 3.2]{CX}, $\A= ab\A\oplus c^{-1}(0)$ and $a^{-1}(0)\cap b\A=0$ (since $b$, $c\in\hat{\A}$, $b\A=bg\A$ and $(1-hc)\A=c^{-1}(0)$). In addition, according to
the proof of \cite[Theorem 4.1]{CX}, $(av)^{-1}(0)=v^{-1}(0)=c^{-1}(0)$. Now, since $av$ is group invertible (\cite[Theorem 4.1]{CX}),
$$
\A=av\A \oplus (av)^{-1}(0)= ab\A\oplus c^{-1}(0).
$$ 
\noindent Note that $av\A= av(av)^\sharp\A$ and   $(av)^{-1}(0)=(1-av(av)^\sharp)\A$ are two closed and complemented
subspace of $\A$.\par

\indent Consider the maps $L_a$, $L_v\colon \A\to \A$. Since  $a^{-1}(0)\cap v\A=0$, $L_a\mid_{v\A}^{av\A}\colon v\A\to av\A$ is an isomorphism.
Moreover, since $(av)^{-1}(0)=v^{-1}(0)$, $vav\A=v\A$ and $L_v\mid_{av\A}^{v\A}\colon av\A\to v\A$ is an isomorphism. Then 
$L_{av}\mid_{av\A}=L_a\mid_{v\A}^{av\A}L_v\mid_{av\A}^{v\A}\colon av\A\to av\A$  and 
$$
L_{(av)^\sharp}\mid_{av\A}= (L_{av}\mid_{av\A})^{-1}= (L_v\mid_{av\A}^{v\A})^{-1}(L_a\mid_{v\A}^{av\A})^{-1}
$$
\noindent (note that if $v=0$ or $a=0$, then $a^{-(b,\hbox{ }c)}=0$ and $\parallel a^{-(b,\hbox{ }c)}-v(\lambda+av)^{-1}\parallel=0$, i.e.,
no bound needs to be studied). In addition, note that since $a^{-(b,\hbox{ }c)}=a^{(2,\hbox{ } l)}_{bg,\hbox{ }1- hc}$,
$L_{a^{-(b,\hbox{ }c)}}=L_{a^{(2,\hbox{ } l)}_{bg,\hbox{ }1- hc}}=(L_a)^{(2)}_{b\A, \hbox{ }c^{-1}(0)}$.\par

\indent  Now consider the operator $H\in\L (\A)$ defined as follows: 
$$
H\mid_{(1-bg)\A}=0,\hskip.5truecm H\mid_{v\A}^{av\A}=(L_v\mid_{av\A}^{v\A})^{-1}\colon v\A\to av\A
$$
($bg\A=b\A= v\A$, $\A= bg\A\oplus (1-bg)\A$). Then, $L_{(av)^\sharp}=HL_{a^{-(b,\hbox{ }c)}}$.\par

\indent In fact, $N(L_{(av)^\sharp})=(av)^{-1}(0)=c^{-1}(0)=N((L_a)^{(2)}_{b\A, \hbox{ }c^{-1}(0)})=N(L_{a^{-(b,\hbox{ }c)}})$.
Then, 
$$
L_{(av)^\sharp}\mid_{(av)^{-1}(0)}=(HL_{a^{-(b,\hbox{ }c)}})\mid_{(av)^{-1}(0)}=0.  
$$
In addition, according to \cite[Remark 1]{LYZW} (see the paragraph that follows Definition \ref{def30}),
$L_{a^{-(b,\hbox{ }c)}}\mid_{av\A}^{v\A}=(L_a)^{(2)}_{b\A, \hbox{ }c^{-1}(0)}\mid_{av\A}^{v\A}=(L_a\mid_{v\A}^{va\A})^{-1}$.
Thus,  
$$
(HL_{a^{-(b,\hbox{ }c)}})\mid_{av\A}=H\mid_{v\A}^{av\A}L_{a^{-(b,\hbox{ }c)}}\mid_{av\A}^{v\A}=L_{(av)^\sharp}\mid_{av\A}.
$$
Since $\A=av\A \oplus (av)^{-1}(0)$, $L_{(av)^\sharp}=HL_{a^{-(b,\hbox{ }c)}}$.\par

\indent  Therefore, $(av)^\sharp=H(a^{-(b,\hbox{ }c)})$ and $\parallel (av)^\sharp\parallel\le \parallel H\parallel\parallel a^{-(b,\hbox{ }c)}\parallel$.
\end{rema}
 
\indent In the following theorem, a bound for  $\parallel a^{-(b,\hbox{ }c)}-v(\lambda+av)^{-1}\parallel$ (Theorem \ref{thm27}) will be given.
First three particular cases will be considered. Under the same hypotheses of Theorem \ref{thm27}, note that if  $a^{-(b,\hbox{ }c)}=0$,
then according to Definition \ref{def1}, $b=0=c$, which in turn implies that $v=0$. In particular, $v(\lambda+av)^{-1}=0$.
Note in particular that if $a=0$, then $a^{-(b,\hbox{ }c)}=0$. 
In addition, if the operator $H\in\L (\A )$ defined in Remark \ref{rem29} is such that $H=0$, then $v=0$, which implies that
$b=0=c$ and then,  according to Definition \ref{def1}, also $a^{-(b,\hbox{ }c)}=0$. As a result, when $\parallel a\parallel\parallel a^{-(b,\hbox{ }c)}\parallel\parallel H\parallel=0$,
$\parallel a^{-(b,\hbox{ }c)}-v(\lambda+av)^{-1}\parallel=0$ and no bound is necessary. \par

\begin{thm}\label{thm31}Let $\A$ be a Banach algebra and consider $a\in \A$ and $b$, $c\in\hat{\A}$ such that $a^{-(b,\hbox{ }c)}$ exists.
Let $v\in\A$ such that $v \A=b\A$ and $v^{-1}(0)=c^{-1}(0)$. Suppose that $\parallel a\parallel\parallel a^{-(b,\hbox{ }c)}\parallel\parallel H\parallel\neq 0$ and consider $\lambda\in \mathbb{C}\setminus \sigma(-av)$ such that 
$|\lambda| < \frac{1}{\parallel a\parallel\parallel a^{-(b,\hbox{ }c)}\parallel^2\parallel H\parallel}$, where $H\in\L (\A )$ is the operator defined in Remark \ref{rem29}. Then,
$$
\parallel a^{-(b,\hbox{ }c)}-v(\lambda+av)^{-1}\parallel \le  \frac{|\lambda |\parallel v\parallel\parallel a\parallel\parallel a^{-(b,\hbox{ }c)} \parallel^3\parallel H\parallel^2}{1-|\lambda |\parallel a\parallel\parallel a^{-(b,\hbox{ }c)}\parallel^2\parallel H\parallel}.
$$
\end{thm}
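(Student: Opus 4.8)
The plan is to express both $a^{-(b,\hbox{ }c)}$ and $v(\lambda+av)^{-1}$ as $v$ times an inverse computed inside the corner Banach algebra $p_{av}\A p_{av}$, and then to apply the elementary perturbation estimate of Remark \ref{rem32} in that algebra. Recall from Remark \ref{rem29} (and the discussion preceding Theorem \ref{theo24}) that $av$ is group invertible, $a^{-(b,\hbox{ }c)}=v(av)^\sharp$, $p_{av}=av(av)^\sharp$ and $(av)^{-1}(0)=v^{-1}(0)$, and that $\parallel(av)^\sharp\parallel\le\parallel H\parallel\parallel a^{-(b,\hbox{ }c)}\parallel$. The first observation is that $p_{av}=a\bigl(v(av)^\sharp\bigr)=a\,a^{-(b,\hbox{ }c)}$, so $\parallel p_{av}\parallel\le\parallel a\parallel\parallel a^{-(b,\hbox{ }c)}\parallel$; combining with the preceding estimate,
$$
\parallel(av)^\sharp\parallel\,\parallel p_{av}\parallel\le\parallel a\parallel\parallel a^{-(b,\hbox{ }c)}\parallel^2\parallel H\parallel .
$$

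Next I would rewrite the resolvent. Since $av$ is group invertible, $av=p_{av}av=avp_{av}$, so $\lambda+av$ commutes with $p_{av}$ and splits as $\lambda+av=(\lambda p_{av}+av)+\lambda(1-p_{av})$, the first summand lying in $p_{av}\A p_{av}$ and the second in $(1-p_{av})\A(1-p_{av})$. As $\lambda\notin\sigma(-av)$, the element $\lambda+av$ is invertible, so by the elementary fact recalled just before Lemma \ref{lem18} both summands are invertible in their respective corner algebras and
$$
(\lambda+av)^{-1}=(\lambda p_{av}+av)^{-1}_{p_{av}\A p_{av}}+\lambda^{-1}(1-p_{av})
$$
(for $\lambda\neq 0$; if $\lambda=0$ then $av$ is invertible, $p_{av}=1$, $a^{-(b,\hbox{ }c)}=v(av)^{-1}$, and both sides of the asserted inequality are $0$). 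Now $1-p_{av}\in(av)^{-1}(0)=v^{-1}(0)$, so $v(1-p_{av})=0$ and left multiplication by $v$ annihilates the second summand, i.e. $v(\lambda+av)^{-1}=v(\lambda p_{av}+av)^{-1}_{p_{av}\A p_{av}}$. Since $a^{-(b,\hbox{ }c)}=v(av)^\sharp=v\,(av)^{-1}_{p_{av}\A p_{av}}$, this gives
$$
a^{-(b,\hbox{ }c)}-v(\lambda+av)^{-1}=v\Bigl((av)^{-1}_{p_{av}\A p_{av}}-(\lambda p_{av}+av)^{-1}_{p_{av}\A p_{av}}\Bigr).
$$

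Finally I would apply Remark \ref{rem32} inside the unital Banach algebra $p_{av}\A p_{av}$ with $x=av$ and $y=\lambda p_{av}+av$, so that $y-x=\lambda p_{av}$, $\parallel y-x\parallel=|\lambda|\parallel p_{av}\parallel$ and $\parallel x^{-1}\parallel=\parallel(av)^\sharp\parallel$; the hypothesis $|\lambda|<\bigl(\parallel a\parallel\parallel a^{-(b,\hbox{ }c)}\parallel^2\parallel H\parallel\bigr)^{-1}$ together with $\parallel(av)^\sharp\parallel\parallel p_{av}\parallel\le\parallel a\parallel\parallel a^{-(b,\hbox{ }c)}\parallel^2\parallel H\parallel$ forces $\parallel y-x\parallel<1/\parallel x^{-1}\parallel$, and Remark \ref{rem32} then yields
$$
\parallel(av)^{-1}_{p_{av}\A p_{av}}-(\lambda p_{av}+av)^{-1}_{p_{av}\A p_{av}}\parallel\le\frac{\parallel(av)^\sharp\parallel^2\,|\lambda|\parallel p_{av}\parallel}{1-|\lambda|\parallel p_{av}\parallel\parallel(av)^\sharp\parallel}.
$$
Multiplying by $\parallel v\parallel$, bounding one factor $\parallel(av)^\sharp\parallel$ by $\parallel H\parallel\parallel a^{-(b,\hbox{ }c)}\parallel$ and the remaining $\parallel(av)^\sharp\parallel\parallel p_{av}\parallel$ by $\parallel a\parallel\parallel a^{-(b,\hbox{ }c)}\parallel^2\parallel H\parallel$ in the numerator, and bounding $\parallel p_{av}\parallel\parallel(av)^\sharp\parallel$ by the same quantity in the denominator, produces exactly the claimed estimate. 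I expect the only genuinely delicate points to be checking that the corner splitting of $(\lambda+av)^{-1}$ collapses after left multiplication by $v$ (that is, $v(1-p_{av})=0$), and the bookkeeping that makes the powers of $\parallel a^{-(b,\hbox{ }c)}\parallel$, $\parallel H\parallel$ and $\parallel p_{av}\parallel$ land in the right places; the rest is the routine Neumann-type estimate already packaged in Remark \ref{rem32}.
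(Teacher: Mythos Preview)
Your proposal is correct and follows essentially the same approach as the paper: split $(\lambda+av)^{-1}$ along $p_{av}$ and $1-p_{av}$, kill the second piece via $v(1-p_{av})=0$, and then apply the perturbation estimate of Remark \ref{rem32} in $p_{av}\A p_{av}$ together with $\parallel(av)^\sharp\parallel\le\parallel H\parallel\parallel a^{-(b,\hbox{ }c)}\parallel$ and $p_{av}=aa^{-(b,\hbox{ }c)}$. Your verification of $v(1-p_{av})=0$ via $1-p_{av}\in(av)^{-1}(0)=v^{-1}(0)$ is in fact slightly more direct than the paper's, which instead expands $p_{av}=hc+(1-hc)abg\,a^{-(b,\hbox{ }c)}$ from Theorem \ref{thm4} and then uses $vhc=v$.
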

\begin{proof}According to Lemma \ref{lem22}  and  \cite[Theorem 4.1]{CX}, $a^{-(b,\hbox{ }c)}=a^{(2,\hbox{ } l)}_{bg,\hbox{ }1- hc}$,
$av$ is group invertible and $a^{-(b,\hbox{ }c)}=v(av)^\sharp$. In addition, since $\lambda\in \mathbb{C}\setminus \sigma(-av)$ and $0\in\sigma (-av)$ (\cite[Theorem 4]{K}),
$\lambda\neq 0$ and $\lambda+ av$ is invertible in $\A$. Moreover, since $\lambda+ av= (\lambda p_{av} + av) +\lambda (1-p_{av})$ and $av=p_{av}avp_{av}$,
$(\lambda+ av)^{-1}= (\lambda p_{av} + av)^{-1}_{p_{av}\A p_{av}} +\lambda^{-1}(1-p_{av})$, where 
$p_{av}= av(av)^\sharp\in\A^\bullet$ and  $(\lambda p_{av} + av)^{-1}_{p_{av\A p_{av}}}$ is the inverse of  $\lambda p_{av} + av$ in $p_{av} \A p_{av}$. 
\par

\indent Now note that according to Theorem \ref{thm4}, if $g\in b\{ 1\}$ and $h\in c\{ 1\}$, then
$$
av(av)^\sharp=aa^{-(b,\hbox{ }c)}= hcabg a^{-(b,\hbox{ }c)} +(1-hc) abg a^{-(b,\hbox{ }c)}= hc + (1-hc) abg a^{-(b,\hbox{ }c)}.
$$
\noindent However, since $v^{-1}(0)=c^{-1}(0)= (hc)^{-1}(0)$, $vav(av)^\sharp= vhc=v$, equivalently 
$v(1-p_{av})=0$. Consequently,
$$
a^{-(b,\hbox{ }c)}-v(\lambda+av)^{-1}=v((av)^\sharp-(\lambda +av)^{-1})=v((av)^\sharp- (\lambda p_{av} +av)^{-1}_{ p_{av}\A p_{av}}).
$$
\indent In particular,
$$
\parallel a^{-(b,\hbox{ }c)}-v(\lambda+av)^{-1}\parallel \le \parallel v\parallel \parallel (av)^\sharp- ( \lambda p_{av} +av)^{-1}_{ p_{av}\A p_{av}}\parallel.
$$
Note that if $(av)^\sharp=0$, since $a^{-(b,\hbox{ }c)}=v(av)^\sharp$, then $a^{-(b,\hbox{ }c)}=0$, which is impossible.
In particular, $p_{av}\neq 0$. 
Now, since $p_{av}=aa^{-(b,\hbox{ }c)}$, according to Remark \ref{rem29}, 
$|\lambda| < \frac{1}{\parallel a\parallel\parallel a^{-(b,\hbox{ }c)}\parallel^2\parallel H\parallel}\le\frac{1}{\parallel p_{av}\parallel\parallel (av)^\sharp\parallel}$.
Furthermore, since $(av)^\sharp$ is the inverse of $av$ in $p_{av}\A p_{av}$,  
according to Remark \ref{rem32},
$$
\parallel a^{-(b,\hbox{ }c)}-v(\lambda+av)^{-1}\parallel \le \parallel v\parallel \frac{|\lambda |\parallel p_{av}\parallel\parallel (av)^\sharp\parallel^2}{1-|\lambda |\parallel p_{av}\parallel\parallel (av)^\sharp\parallel}.
$$
Therefore, since $|\lambda |<\frac{1}{\parallel a\parallel\parallel a^{-(b,\hbox{ }c)}\parallel^2\parallel H\parallel}$, according to Remark \ref{rem29},
$$
\parallel a^{-(b,\hbox{ }c)}-v(\lambda+av)^{-1}\parallel \le  \frac{|\lambda |\parallel v\parallel\parallel a\parallel\parallel a^{-(b,\hbox{ }c)} \parallel^3\parallel H\parallel^2}{1-|\lambda |\parallel a\parallel\parallel a^{-(b,\hbox{ }c)}\parallel^2\parallel H\parallel}.
$$
\end{proof}

\indent To establish a bound for $\parallel a^{-(b,\hbox{ }c)}-(\lambda+va)^{-1}v\parallel$ (Theorem \ref{thm27}), it is necessary to note some facts first.\par

\begin{rema}\label{rema79}\rm Let $\A$ be a Banach algebra and consider $a\in \A$ and $b$, $c\in\hat{\A}$ such that $a^{-(b,\hbox{ }c)}$ exists.
Let $v\in\A$ be such that $v \A=b\A$ and $v^{-1}(0)=c^{-1}(0)$. Then, as in the proof of Theorem \ref{theo23},
the $(c, b)$-inverse of $a\in\A_{op}$ exists and coincide with $a^{-(b,\hbox{ }c)}$. Moreover, 
$v\in\A_{op}$ is such that  $v\cdot_{op}\A=c\cdot_{op}\A$ and $v^{-1}(0)=b^{-1}(0)$ (in $\A_{op}$). Thus, since $a^{-(b,\hbox{ }c)}-(\lambda+va)^{-1}v=
 a^{-(b,\hbox{ }c)}-v\cdot_{op}(\lambda+a\cdot_{op}v)^{-1}$, to give a bound for  $\parallel a^{-(b,\hbox{ }c)}-(\lambda+va)^{-1}v\parallel$,
it is enough to apply Theorem \ref{thm31} with the operator $\tilde{H}\in\L( \A)$, which is the map defined in Remark \ref{rem29}
that corresponds to the $(c, b)$-inverse of $a$ in $\A_{op}$. Thus, $\tilde{H}\in\L( \A)$ is defined as follows:
$$
\tilde{H}\mid_{(1-hc)\A}=0, \hskip.5truecm \tilde{H}\mid_{\A v}^{\A va}=(R_v\mid_{\A va}^{\A v})^{-1}\colon \A v\to \A va.
$$
\noindent Note that  when $\parallel a\parallel\parallel a^{-(b,\hbox{ }c)}\parallel\parallel \tilde{H}\parallel=0$,
$a^{-(b,\hbox{ }c)}=0$, $v=0$ and  $\parallel a^{-(b,\hbox{ }c)}-(\lambda+va)^{-1}v\parallel=0$, so that no bound is needed.
\end{rema}

\begin{cor}\label{coro799}Let $\A$ be a Banach algebra and consider $a\in \A$ and $b$, $c\in\hat{\A}$ such that $a^{-(b,\hbox{ }c)}$ exists.
Let $v\in\A$ such that $v \A=b\A$ and $v^{-1}(0)=c^{-1}(0)$. Suppose that  $\parallel a\parallel\parallel a^{-(b,\hbox{ }c)}\parallel\parallel \tilde{H}\parallel\neq 0$ 
and consider $\lambda\in \mathbb{C}\setminus \sigma(-va)$ such that 
$|\lambda| < \frac{1}{\parallel a\parallel\parallel a^{-(b,\hbox{ }c)}\parallel^2\parallel \tilde{H}\parallel}$, where $\tilde{H}\in\L (\A )$ is the operator defined in Remark \ref{rema79}. Then,
$$
\parallel a^{-(b,\hbox{ }c)}-(\lambda+va)^{-1}v\parallel \le  \frac{|\lambda |\parallel v\parallel\parallel a\parallel\parallel a^{-(b,\hbox{ }c)} \parallel^3\parallel \tilde{H}\parallel^2}{1-|\lambda |\parallel a\parallel\parallel a^{-(b,\hbox{ }c)}\parallel^2\parallel \tilde{H}\parallel}.
$$
\end{cor}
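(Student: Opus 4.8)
The plan is to derive the bound by transporting everything to the opposite algebra $\A_{op}$ and then invoking Theorem \ref{thm31}, in the same spirit in which the two limit formulas in Theorem \ref{thm27} and the two series in Theorem \ref{theo24} were deduced from one another. Concretely, I would first recall from Remark \ref{rema79} that the $(c,b)$-inverse of $a$, computed in $\A_{op}$, exists and equals $a^{-(b,\hbox{ }c)}$, and that $v$ regarded as an element of $\A_{op}$ satisfies $v\cdot_{op}\A=c\cdot_{op}\A$ together with $v^{-1}(0)=b^{-1}(0)$ (this last identity being taken in $\A_{op}$). These are exactly the hypotheses under which Theorem \ref{thm31} applies to the datum $a\in\A_{op}$ with the pair $(c,b)$ playing the role of $(b,c)$.

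Next I would use the identity
$$
a^{-(b,\hbox{ }c)}-(\lambda+va)^{-1}v=a^{-(b,\hbox{ }c)}-v\cdot_{op}\bigl(\lambda+a\cdot_{op}v\bigr)^{-1}
$$
recorded in Remark \ref{rema79}, which turns the left-hand side into precisely the expression estimated by Theorem \ref{thm31} inside $\A_{op}$. Since $\A$ and $\A_{op}$ have the same underlying normed space, the norms $\parallel a\parallel$, $\parallel a^{-(b,\hbox{ }c)}\parallel$ and $\parallel v\parallel$ are unaffected by the passage; moreover $\sigma(a\cdot_{op}v)=\sigma(va)$, so $\lambda\in\mathbb{C}\setminus\sigma(-va)$ is nothing but the spectral requirement of Theorem \ref{thm31} in $\A_{op}$. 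The auxiliary operator furnished by Remark \ref{rem29} for this transported datum is, by its very definition, the operator $\tilde H\in\L(\A)$ displayed in Remark \ref{rema79}: passing to $\A_{op}$ interchanges the left multiplications $L_v$, $L_{av}$ with the right multiplications $R_v$, $R_{va}$ and swaps the relevant ranges and kernels, which is why $\tilde H$ is built from $R_v$ rather than $L_v$. Substituting all of this into the conclusion of Theorem \ref{thm31}, the smallness condition $|\lambda|<\frac{1}{\parallel a\parallel\parallel a^{-(b,\hbox{ }c)}\parallel^2\parallel\tilde H\parallel}$ becomes admissible and the stated inequality for $\parallel a^{-(b,\hbox{ }c)}-(\lambda+va)^{-1}v\parallel$ follows verbatim.

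I do not anticipate a genuine difficulty; the whole argument is bookkeeping through the $\A\leftrightarrow\A_{op}$ dictionary. The one point that deserves a line of care is the claim that the operator of Remark \ref{rem29}, when formed for the $(c,b)$-inverse of $a$ in $\A_{op}$, is exactly $\tilde H$ — that is, that the opposite-algebra passage correctly interchanges $L$ and $R$ and leaves $\A v$, $\A va$ and $(1-hc)\A$ in the positions indicated. The degenerate possibility is handled at once: if $\parallel a\parallel\parallel a^{-(b,\hbox{ }c)}\parallel\parallel\tilde H\parallel=0$ then $a^{-(b,\hbox{ }c)}=0$ and $v=0$ by Remark \ref{rema79}, so both sides of the desired estimate vanish; this is precisely why that product is assumed nonzero in the statement.
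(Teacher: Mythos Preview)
Your proposal is correct and follows exactly the approach of the paper, whose proof consists simply of the instruction to apply Theorem \ref{thm31} together with Remark \ref{rema79}. Your write-up merely makes explicit the bookkeeping that Remark \ref{rema79} already sets up, including the identification of $\tilde H$ with the Remark \ref{rem29} operator for the $(c,b)$-inverse in $\A_{op}$.
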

\begin{proof}Apply Theorem \ref{thm31} and Remark \ref{rema79}.
\end{proof}
\indent Next the continuity of the $(b, c)$-inverse inverse will be studied.
In first place, a technical lemma will be presented.\par

\begin{lem}\label{lem25}Let $\A$ be a Banch algebra and consider $a$, $b\in\A$ and $p$, $q$, $r$, $s\in \A^\bullet$ such that
$a^{-(p,\hbox{ }q)}$ and $b^{-(r,\hbox{ }s)}$ exist. Then
\begin{align*}
b^{-(r,\hbox{ }s)}-a^{-(p,\hbox{ }q)}&=b^{-(r,\hbox{ }s)}(s-q)(1-aa^{-(p,\hbox{ }q)})-  (1-b^{-(r,\hbox{ }s)}b)(r-p)a^{-(p,\hbox{ }q)}\\
&\hskip.5truecm + b^{-(r,\hbox{ }s)}(a-b)a^{-(p,\hbox{ }q)}.\\
\end{align*}
\end{lem}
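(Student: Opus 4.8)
The statement is a purely algebraic identity in $\A$, so the plan is to expand the right-hand side and collapse it using the defining relations of the two Bott-Duffin inverses. Put $x:=a^{-(p,\hbox{ }q)}$ and $y:=b^{-(r,\hbox{ }s)}$. By Definition \ref{def15}, together with the fact that a Bott-Duffin inverse is an outer inverse (\cite[Theorem 2.1 (ii)]{D}), one has
\begin{align*}
&px=x,\quad xq=x,\quad xap=p,\quad qax=q,\quad xax=x,\\
&ry=y,\quad ys=y,\quad ybr=r,\quad sby=s,\quad yby=y.
\end{align*}

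The first step I would carry out is to record two short simplifications. Since $ys=y$ and $q(1-ax)=q-qax=q-q=0$, the first summand on the right reduces to
\[
y(s-q)(1-ax)=ys(1-ax)-yq(1-ax)=y(1-ax)=y-yax .
\]
Similarly, since $ybr=r$ one gets $(1-yb)(r-p)=r-p-ybr+ybp=-(1-yb)p$, and then $px=x$ yields
\[
(1-yb)(r-p)x=-(1-yb)px=-(1-yb)x=-x+ybx .
\]
Note that these reductions use only one half of each of the relations $x=px=xq$, $y=ry=ys$, together with $qax=q$ and $ybr=r$; the remaining identities play no role here.

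With these in hand the computation finishes by cancellation: the last summand is $y(a-b)x=yax-ybx$, so on adding the three contributions the pair $\pm yax$ cancels and the pair $\pm ybx$ cancels, leaving exactly $y-x=b^{-(r,\hbox{ }s)}-a^{-(p,\hbox{ }q)}$. One point to watch: for the cancellation to produce $y-x$ (rather than $y+x-2ybx$) the middle summand must enter with a plus sign, i.e. the displayed factor should read $(p-r)$ in place of $(r-p)$, equivalently the sign before that term should be reversed. Beyond this there is no genuine obstacle — the main care needed is purely bookkeeping: choosing, for each two-sided relation $x=px=xq$, $y=ry=ys$, the correct half to invoke and on the correct side so that composite factors such as $yq(1-ax)$, $(1-yb)r$, $px$ genuinely vanish or simplify, and tracking the signs through the expansion so that the cross terms $yax$ and $ybx$ occur in cancelling pairs.
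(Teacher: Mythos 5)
Your computation is correct and follows essentially the same route as the paper's proof: both arguments rest on exactly the reductions you list, namely $b^{-(r,\hbox{ }s)}s(1-aa^{-(p,\hbox{ }q)})=b^{-(r,\hbox{ }s)}-b^{-(r,\hbox{ }s)}aa^{-(p,\hbox{ }q)}$ together with $q(1-aa^{-(p,\hbox{ }q)})=0$, and $(1-b^{-(r,\hbox{ }s)}b)r=0$ together with $pa^{-(p,\hbox{ }q)}=a^{-(p,\hbox{ }q)}$, followed by cancellation of the cross terms against $b^{-(r,\hbox{ }s)}(a-b)a^{-(p,\hbox{ }q)}$. The only difference is direction: the paper assembles the left-hand side from two partial identities, while you expand the right-hand side.

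Your sign remark is also correct, and it deserves to be made explicit: as printed, the identity is false, and the slip occurs in the final display of the paper's own proof. The paper first establishes $(1-b^{-(r,\hbox{ }s)}b)(r-p)a^{-(p,\hbox{ }q)}=b^{-(r,\hbox{ }s)}ba^{-(p,\hbox{ }q)}-a^{-(p,\hbox{ }q)}$, but then inserts this term with the opposite sign, as if it equalled $a^{-(p,\hbox{ }q)}-b^{-(r,\hbox{ }s)}ba^{-(p,\hbox{ }q)}$. Combining the two partial identities correctly gives
\begin{align*}
b^{-(r,\hbox{ }s)}-a^{-(p,\hbox{ }q)}&=b^{-(r,\hbox{ }s)}(s-q)(1-aa^{-(p,\hbox{ }q)})+(1-b^{-(r,\hbox{ }s)}b)(r-p)a^{-(p,\hbox{ }q)}\\
&\hskip.5truecm + b^{-(r,\hbox{ }s)}(a-b)a^{-(p,\hbox{ }q)},
\end{align*}
which is your corrected version, whereas the printed right-hand side equals $b^{-(r,\hbox{ }s)}+a^{-(p,\hbox{ }q)}-2b^{-(r,\hbox{ }s)}ba^{-(p,\hbox{ }q)}$, exactly as you computed. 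A concrete check that the printed form fails: in $M_2(\mathbb{C})$ take $a=b=1$, $p=q=E_{11}$, $r=s=E_{22}$; then $a^{-(p,\hbox{ }q)}=E_{11}$, $b^{-(r,\hbox{ }s)}=E_{22}$, the printed right-hand side equals $E_{11}+E_{22}$, while the left-hand side is $E_{22}-E_{11}$. The same sign misprint is inherited by Corollary \ref{cor28000}, but Theorem \ref{theo26} only uses the resulting norm estimate, so the applications are unaffected once the corrected identity is used.
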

\begin{proof}Recall that according to Theorem \ref{thm4},  $a^{-(p,\hbox{ }q)}\in p\A q$ and $b^{-(r,\hbox{ }s)}br=r$. Thus,

\begin{align*}
b^{-(r,\hbox{ }s)}ba^{-(p,\hbox{ }q)}-a^{-(p,\hbox{ }q)}&= -(1-b^{-(r,\hbox{ }s)}b)pa^{-(p,\hbox{ }q)}\\
\hskip.5truecm &=[(1-b^{-(r,\hbox{ }s)}b)r-(1-b^{-(r,\hbox{ }s)}b)p]a^{-(p,\hbox{ }q)}\\
\hskip.5truecm & =(1-b^{-(r,\hbox{ }s)}b)(r-p)a^{-(p,\hbox{ }q)}.
\end{align*}

\indent In addition, according again to Theorem \ref{thm4}, $b^{-(r,\hbox{ }s)}\in r\A s$ and $qaa^{-(p,\hbox{ }q)}=q$.
In particular,
\begin{align*}
b^{-(r,\hbox{ }s)}-b^{-(r,\hbox{ }s)}aa^{-(p,\hbox{ }q)}&=b^{-(r,\hbox{ }s)}s(1-aa^{-(p,\hbox{ }q)})\\
\hskip.5truecm &=b^{-(r,\hbox{ }s)}[s(1-aa^{-(p,\hbox{ }q)})-q(1-aa^{-(p,\hbox{ }q)})]\\
\hskip.5truecm &=b^{-(r,\hbox{ }s)}(s-q)(1-aa^{-(p,\hbox{ }q)})\\
\end{align*}
\indent Therefore,

\begin{align*}
b^{-(r,\hbox{ }s)}-a^{-(p,\hbox{ }q)}&=b^{-(r,\hbox{ }s)}(s-q)(1-aa^{-(p,\hbox{ }q)}) +b^{-(r,\hbox{ }s)}aa^{-(p,\hbox{ }q)}\\
\hskip.5truecm &- (1-b^{-(r,\hbox{ }s)}b)(r-p)a^{-(p,\hbox{ }q)}-b^{-(r,\hbox{ }s)}ba^{-(p,\hbox{ }q)}\\
\hskip.5truecm &=b^{-(r,\hbox{ }s)}(s-q)(1-aa^{-(p,\hbox{ }q)})-  (1-b^{-(r,\hbox{ }s)}b)(r-p)a^{-(p,\hbox{ }q)}\\
\hskip.5truecm &+ b^{-(r,\hbox{ }s)}(a-b)a^{-(p,\hbox{ }q)}.\\
\end{align*}
\end{proof}

\indent In the following Corollary the $(b, c)$-inverse case will be considered.\par

\begin{cor}\label{cor28000}Let $\A$ be a Banch algebra and consider  $a_i\in\A$ and $b_i$, $c_i\in \A^\bullet$  such that
$a_i^{-(b_i,\hbox{ }c_i)}$  exist ($i=1, 2$). Consider $g_i\in b_i\{1\}$ and $h_i\in c_i\{ 1\}$ ($i=1, 2$). Then
\begin{align*}
a_2^{-(b_2,\hbox{ }c_2)}-&a_1^{-(b_1,\hbox{ }c_1)}=a_2^{-(b_2,\hbox{ }c_2)}(h_2c_2-h_1c_1)(1-a_1a_1^{-(b_1,\hbox{ }c_1)})\\
&-  (1-a_2^{-(b_2,\hbox{ }c_2)}a_2)(b_2g_2-b_1g_1)a_1^{-(b_1,\hbox{ }c_1)}
 + a_2^{-(b_2,\hbox{ }c_2)}(a_1-a_2)a_1^{-(b_1,\hbox{ }c_1)}.\\
\end{align*}
\end{cor}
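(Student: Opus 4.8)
The plan is to derive the identity directly from Lemma~\ref{lem25} by replacing each pair $(b_i,c_i)$ with a pair of idempotents. First I would note that, because $g_i\in b_i\{1\}$ and $h_i\in c_i\{1\}$, the products $b_ig_i$ and $h_ic_i$ are idempotents: $(b_ig_i)^2=(b_ig_ib_i)g_i=b_ig_i$, and similarly $(h_ic_i)^2=h_i(c_ih_ic_i)=h_ic_i$. By Proposition~\ref{pro16}, the element $a_i$ is $(b_i,c_i)$-invertible if and only if it is Bott-Duffin $(b_ig_i,h_ic_i)$-invertible, and in that case $a_i^{-(b_i,\hbox{ }c_i)}=a_i^{-(b_ig_i,\hbox{ }h_ic_i)}$. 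In particular, the hypotheses of the corollary guarantee that the Bott-Duffin $(b_1g_1,h_1c_1)$-inverse of $a_1$ and the Bott-Duffin $(b_2g_2,h_2c_2)$-inverse of $a_2$ both exist.

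Then I would apply Lemma~\ref{lem25} with $a:=a_1$, $b:=a_2$ and the four idempotents $p:=b_1g_1$, $q:=h_1c_1$, $r:=b_2g_2$, $s:=h_2c_2$. The lemma gives
\[
b^{-(r,\hbox{ }s)}-a^{-(p,\hbox{ }q)}=b^{-(r,\hbox{ }s)}(s-q)(1-aa^{-(p,\hbox{ }q)})-(1-b^{-(r,\hbox{ }s)}b)(r-p)a^{-(p,\hbox{ }q)}+b^{-(r,\hbox{ }s)}(a-b)a^{-(p,\hbox{ }q)},
\]
and substituting $a^{-(p,\hbox{ }q)}=a_1^{-(b_1,\hbox{ }c_1)}$, $b^{-(r,\hbox{ }s)}=a_2^{-(b_2,\hbox{ }c_2)}$, $s-q=h_2c_2-h_1c_1$, $r-p=b_2g_2-b_1g_1$ and $a-b=a_1-a_2$ produces precisely the asserted formula.

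Since everything reduces to a substitution once Lemma~\ref{lem25} and Proposition~\ref{pro16} are available, there is no genuine obstacle here; the only point that needs an explicit (one-line) check is that $b_ig_i$ and $h_ic_i$ are idempotents, which is exactly what makes the Bott-Duffin framework --- and hence Lemma~\ref{lem25}, stated in terms of idempotents --- applicable to the $(b,c)$-inverses appearing in the corollary.
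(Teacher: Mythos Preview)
Your proposal is correct and follows exactly the paper's own proof: invoke Proposition~\ref{pro16} to rewrite each $a_i^{-(b_i,\hbox{ }c_i)}$ as the Bott--Duffin inverse $a_i^{-(b_ig_i,\hbox{ }h_ic_i)}$, then apply Lemma~\ref{lem25} with $a=a_1$, $b=a_2$, $p=b_1g_1$, $q=h_1c_1$, $r=b_2g_2$, $s=h_2c_2$. Your explicit one-line verification that $b_ig_i$ and $h_ic_i$ are idempotents is a welcome addition, but otherwise the argument is identical to the paper's.
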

\begin{proof} According to Proposition \ref{pro16}, 
$$a^{-(b_1,\hbox{ }c_1)}=a^{-(b_1g_1,\hbox{ }h_1c_1)}, \hskip.3truecm a_2^{-(b_2,\hbox{ }c_2)}= a_2^{-(b_2g_2,\hbox{ }h_2c_2)}.$$

\noindent Now apply Lemma \ref{lem25} for $a=a_1$, $b=a_2$, $p=b_1g_1$, $q=h_1c_1$, $r=b_2g_2$ and $s=h_2c_2$.
\end{proof}
\indent Now the continuity of the $(b, c)$-inverse will be characterized.\par

\begin{thm}\label{theo26}Let $\A$ be a Banach algebra and consider $a\in \A$, $b$, $c\in\hat{\A}$, $g\in b\{1\}$ and $h\in c\{ 1\}$
such that $a^{-(b,\hbox{ }c)}$ exists. Supposse that there are sequences $(a_n)_{n\ge 1}\subset \A$,
$(b_n)_{n\ge 1}\subset\hat{\A}$, $(g_n)_{n\ge 1}\subset\A$, $(c_n)_{n\ge 1}\subset\hat{\A}$ and $(h_n)_{n\ge 1}\subset\A$ such that
for each $n\in\mathbb{N}$, $g_n\in b_n\{1\}$, $h_n\in c_n\{1\}$, $a_n^{-(b_n,\hbox{ }c_n)}$ exists and $(a_n)_{n\ge 1}$ (respectively
$(b_n g_n)_{\ge 1}$,  $(h_nc_n)_{n\ge 1}$) converges to $a$ (respectively to $bg$, $hc$).  Then, the following statements are equivalent.\par
\noindent {\rm (i)} There exists $k\in\mathbb{R}$ such that $\parallel  a_n^{-(b_n,\hbox{ }c_n)}\parallel \le k$.\par
\noindent {\rm (ii)} The sequence $a_n^{-(b_n,\hbox{ }c_n)}$ converges to $a^{-(b,\hbox{ }c)}$.
\end{thm}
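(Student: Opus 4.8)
The equivalence to prove is the standard ``boundedness $\iff$ convergence'' dichotomy for a generalized inverse, and the natural tool is the perturbation identity for the $(b,c)$-inverse recorded in Corollary \ref{cor28000}. The implication (ii)$\Rightarrow$(i) is immediate: a convergent sequence in a Banach algebra is bounded, so one may take $k=\sup_n\parallel a_n^{-(b_n,\hbox{ }c_n)}\parallel<\infty$. The whole content is in (i)$\Rightarrow$(ii), and the plan is to extract quantitative control of $\parallel a_n^{-(b_n,\hbox{ }c_n)}-a^{-(b,\hbox{ }c)}\parallel$ directly from Corollary \ref{cor28000}, applied with $a_1=a_n$, $b_1=b_n$, $c_1=c_n$ (and the fixed $a$, $b$, $c$ in the other slot).

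\medskip
\noindent\textbf{Main argument.} Apply Corollary \ref{cor28000} with the roles $a_2=a_n$, $b_2=b_n$, $c_2=c_n$, $a_1=a$, $b_1=b$, $c_1=c$ (and the associated inner inverses $g_n,h_n,g,h$). This gives
\begin{align*}
a_n^{-(b_n,\hbox{ }c_n)}-a^{-(b,\hbox{ }c)}&=a_n^{-(b_n,\hbox{ }c_n)}(h_nc_n-hc)(1-aa^{-(b,\hbox{ }c)})\\
&\hskip.3truecm-(1-a_n^{-(b_n,\hbox{ }c_n)}a_n)(b_ng_n-bg)a^{-(b,\hbox{ }c)}\\
&\hskip.3truecm+a_n^{-(b_n,\hbox{ }c_n)}(a-a_n)a^{-(b,\hbox{ }c)}.
\end{align*}
Now take norms and use the submultiplicativity of $\parallel\cdot\parallel$. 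In the first term, $\parallel a_n^{-(b_n,\hbox{ }c_n)}\parallel\le k$ and $\parallel 1-aa^{-(b,\hbox{ }c)}\parallel$ is a fixed constant, so the term is bounded by $k\parallel 1-aa^{-(b,\hbox{ }c)}\parallel\cdot\parallel h_nc_n-hc\parallel\to 0$ by hypothesis. In the second term, $\parallel 1-a_n^{-(b_n,\hbox{ }c_n)}a_n\parallel\le 1+k\parallel a_n\parallel\le 1+k(\parallel a\parallel+\sup_n\parallel a_n-a\parallel)$, which is bounded since $(a_n)$ converges, and $\parallel a^{-(b,\hbox{ }c)}\parallel$ is fixed, so this term is dominated by a constant times $\parallel b_ng_n-bg\parallel\to 0$. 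The third term is at most $k\parallel a^{-(b,\hbox{ }c)}\parallel\cdot\parallel a-a_n\parallel\to 0$. Adding the three estimates shows $\parallel a_n^{-(b_n,\hbox{ }c_n)}-a^{-(b,\hbox{ }c)}\parallel\to 0$, which is (ii).

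\medskip
\noindent\textbf{Where the difficulty lies.} There is essentially no deep obstacle here once Corollary \ref{cor28000} is available: the real work was packaged into Lemma \ref{lem25} and its corollary, which rewrite the difference of two $(b,c)$-inverses as a sum of three ``perturbation'' terms each carrying an explicit factor $(h_2c_2-h_1c_1)$, $(b_2g_2-b_1g_1)$ or $(a_1-a_2)$ that tends to $0$ under the stated convergence hypotheses. The only points requiring a little care are (a) verifying that every factor not carrying a vanishing difference stays \emph{bounded} in $n$ — this uses the uniform bound $k$ from hypothesis (i) together with the convergence (hence boundedness) of $(a_n)$ — and (b) noting that $h_nc_n,b_ng_n$ are genuinely idempotents (as $b_n,c_n\in\widehat{\A}$ with inner inverses $g_n,h_n$), so that Corollary \ref{cor28000} applies with $b_n,c_n\in\A^\bullet$ replaced by the hybrid-compatible idempotent data exactly as in the passage $a^{-(b,\hbox{ }c)}=a^{-(bg,\hbox{ }hc)}$ of Proposition \ref{pro16}. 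Collecting these remarks completes the proof.
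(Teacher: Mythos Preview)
Your proof is correct and follows exactly the approach the paper takes: the implication (ii)$\Rightarrow$(i) is dismissed as trivial, and (i)$\Rightarrow$(ii) is obtained by invoking Corollary~\ref{cor28000} and letting the three perturbation factors tend to zero while the remaining factors stay bounded by hypothesis. The paper's own proof consists of the single sentence ``apply Corollary~\ref{cor28000},'' and you have simply written out the routine norm estimates that this entails.
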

\begin{proof}It is enough to prove that statement (i) implies statement (ii). To this end, apply Corollary  \ref{cor28000}.
\end{proof}

\vskip.5truecm
\noindent Enrico Boasso\par
\noindent E-mail: enrico\_odisseo@yahoo.it
\vskip.5truecm
\noindent  Gabriel Kant\'un-Montiel\par
\noindent E-mail: gkantun@fcfm.buap.mx

\end{document}